\newcommand{\losemi}{{\otimes \kern -.78em \ltimes}}
\newcommand{\rosemi}{{\otimes \kern -.78em \rtimes}}
\renewcommand{\a}{\alpha}
\newcommand{\Hh}{\operatorname{H}}
\newcommand{\HH}{\operatorname{HH}}
\newcommand{\DGr}{\operatorname{Dist}(G_{r})_{\operatorname{ad}}}
\newcommand{\DGone}{\operatorname{Dist}(G_{1})_{\operatorname{ad}}}
\newcommand{\DBone}{\operatorname{Dist}(B_{1})_{\operatorname{ad}}}
\newcommand{\la}{\lambda}
\newcommand{\opH}{\operatorname{H}}
\newcommand{\leqnomode}{\tagsleft@true}
\newcommand{\reqnomode}{\tagsleft@false}
\newtheorem{theorem}{Theorem}[subsection]
\let\c@fact\c@theorem\makeatother
\let\c@note\c@theorem\makeatother
\let\c@lemma\c@theorem\makeatother
\let\c@lemma\c@theorem\makeatother
\let\c@quest\c@theorem\makeatother
\newtheorem{prop}{Proposition}[subsection]
\let\c@prop\c@theorem\makeatother
\let\c@conj\c@theorem\makeatother
\let\c@cor\c@theorem\makeatother
\let\c@defn\c@theorem\makeatother
\theoremstyle{definition}
\let\c@remark\c@theorem\makeatother
\let\c@example\c@theorem\makeatother
\numberwithin{equation}{subsection}
\crefname{theorem}{Theorem}{Theorems}
\crefname{fact}{Fact}{Facts}
\crefname{note}{Note}{Notes}
\crefname{lemma}{Lemma}{Lemmas}
\crefname{alg}{Algorithm}{Algorithms}
\crefname{remark}{Remark}{Remarks}
\crefname{example}{Example}{Examples}
\crefname{prop}{Proposition}{Propositions}
\crefname{conj}{Conjecture}{Conjectures}
\crefname{cor}{Corollary}{Corollaries}
\crefname{defn}{Definition}{Definitions}
\crefname{equation}{\!\!}{\!\!} 
\newcounter{listequation}
\begin{document}

\title{On the Hochschild Cohomology for Frobenius Kernels}

\begin{abstract} In this paper the authors investigate the structure of the Hochschild cohomology for Frobenius kernels. The authors first establish some fundamental constructions to compute Hochschild cohomology by using spectral sequences. This enables us to provide a complete description of the $G$-algebra structure of the Hochschild cohomology for the first Frobenius kernel $G_{1}$ where $G=SL_{2}$. This computation heavily relies on the calculation of the adjoint action on the restricted enveloping algebra. 
\end{abstract}

\author{\sc Tek\.{I}n Karada\u{g}}
\address
{Department of Mathematics\\ University of Georgia \\
Athens\\ GA~30602, USA}
\email{karadag@math.uga.edu}

\author{\sc Daniel K. Nakano}
\address
{Department of Mathematics\\ University of Georgia \\
Athens\\ GA~30602, USA}
\thanks{Research of the second author was supported in part by
NSF grants DMS-2101941 and DMS-2401184}
\email{nakano@math.uga.edu}

\maketitle
\section{Introduction} 

\subsection{} Let $G$ be a affine algebraic group scheme defined over ${\mathbb F}_{p}$ and $F:G\rightarrow G$ be the Frobenius morphism. The scheme theoretic 
kernels, $G_{r}$, are obtained by taking the kernel of the $r$th iteration of $F$. The infinitesimal group schemes $G_{r}$ are of major interest in the study of the representation theory of 
$G$, and the representation theory of $G_{r}$ is equivalent to studying the modules for the finite-dimensional Hopf algebra $\text{Dist}(G_{r})$ (cf. \cite[I. Ch. 9]{rags}. 

A natural question is to determine the Hochschild cohomology of $\text{Dist}(G_{r})$ which will be denoted by $\HH^{\bullet}(G_{r})$. Through a series of isomorphisms one has 
$$\HH^{\bullet}(G_{r})\cong\Hh^{\bullet}(G_{r},\DGr) .$$ 
where $\text{Dist}(G_{r})_{ad}$ is the $G_{r}$-module obtained by using the adjoint action of $G_{r}$. 

Friedlander and Suslin \cite{FS97} proved that the cohomology ring $R=\Hh^{\bullet}(G_{r},k)$ is a finitely generated $k$-algebra. One also has that the Hochschild cohomology 
$\HH^{\bullet}(G_{r})$ is a finitely generated module over $R$. Furthermore, $\HH^{\bullet}(G_{r})$ is also a finitely generated $k$-algebra. A natural question to ask is what is the 
structure of $\HH^{\bullet}(G_{r})$ as a $G$-algebra? 

\subsection{} The computation of $R=\Hh^{\bullet}(G_{r},k)$ is still an open for problem for $r\geq 2$ and for $r=1$ when $p<h-1$ (cf. \cite{BNPP14}). The reader is referred to the survey article by Bendel \cite{B24} for 
details pertaining to the history and related problems surrounding this computation. For $r=1$ and $p>h$, the odd degree cohomology of $R$ is zero and the even degree cohomology identifies with the 
ring of regular functions on the nilpotent cone of ${\mathfrak g}=\text{Lie }G$ \cite{AJ84, FP86}. A natural starting point for computing the Hochschild cohomology is to first understand this case. 

The main ingredient in our work will be the use of spectral sequences to compute Hochschild cohomology. Historically, this has not been the standard approach and many authors have involved the use of 
resolutions to make explicit calculations \cite{N13,W19}. In our work, we will later see one of the main obstacles will be to understand the $G$-structure of $\text{Dist}(G_{1})$ which identifies with a truncated polynomial algebra $\overline{S}^{\bullet}({\mathfrak g})$. For quantum groups, the adjoint action on the small quantum groups is much more complicated, and this is the main reason why we will only focus on Frobenius kernels in this paper. 

\subsection{} The paper is organized as follows. In Section~\ref{S:prelim}, the notation and conventions for the paper are introduced. The relevant spectral sequences are described and will be used 
for the computations throughout the paper. We also include a subsection on how these spectral sequences will be used to make the key calculations in the paper. 

In the following section, Section~\ref{S:Taft}, we provide a computation of the $B$-algebra structure of the Hochschild cohomology for $\HH(B_{1})$ for the rank one case, where $B$ is a Borel subgroup of 
$G$. This computation is important to see because it demonstrates the power in using spectral sequences and Kostant's theorem for Lie algebra cohomology to obtain the results efficiently. Prior computations by others have involved explicit resolutions (cf. \cite{N13}). 

The main section of the paper, Section~\ref{S:SL2}, is devoted to providing a complete answer to the $G$-algebra structure of $\HH(G_{1})$ when $G=SL_2$. The computation is quite involved 
due to the fact that one needs knowledge about the action action on $\text{Dist}(G_{1})$. We provide a complete answer to this question for all primes by using results about tilting modules and good filtrations. 
In the appendix (Section~\ref{S:appendix}), we provide tables for $p=2,3,5,7$ that indicate the intricacies of the $G$-module structure on  $\text{Dist}(G_{1})$.

\section{Preliminaries}\label{S:prelim}

\subsection{Notation}\label{S:notation} In this paper we will generally follow the standard conventions in \cite{rags}. Throughout this paper, $k$ is an algebraically closed field of characteristic $p>0$. Let
\vskip .25cm 
\begin{itemize}
\item[$\bullet$] $G$ be a connected semisimple algebraic group scheme defined over ${\mathbb F}_{p}$.
\item[$\bullet$] $T$ be a fixed split maximal torus in $G$. 
\item[$\bullet$] $\Phi$ be the root system associated to $(G,T)$. 
\item[$\bullet$]  $\Phi^{\pm}$ be the set of positive (resp. negative) roots. 
\item[$\bullet$]  $\Delta=\{\alpha_1,\dots,\alpha_{l}\}$ be the set of simple roots determined by $\Phi^+$. 
\item[$\bullet$] $B$ be the Borel subgroup given by the set of negative roots, $U$ be the unipotent radical of $B$. 
\item[$\bullet$] $W$ be the Weyl group associated with $\Phi$. 
\item[$\bullet$]  $h$ denote the Coxeter number for the root system associated to $G$, i.e., $h = \langle\rho,\alpha_0^{\vee}\rangle + 1$.
\item[$\bullet$] $X:=X(T)$ be the integral weight lattice spanned by the fundamental weights $\{\omega_1,\dots,\omega_l\}$. 
\item[$\bullet$] $X^{+}$ denote the dominant weights for $G$.  \item[$\bullet$] $X_{r}$ be the $p^{r}$-restricted weights. 
\item[$\bullet$] $\leq$ be the order relation defined on $X$ via $\mu \leq \la$ iff $\la - \mu = \sum_{\a \in \Delta}n_{\a}\a$ for $n_{\a} \in {\mathbb Z}_{\geq 0}$.
\end{itemize} 

For $\lambda\in X^{+}$, there are four fundamental families of finite-dimensional rational $G$-modules: 
\begin{itemize} 
\item[$\bullet$] $L(\lambda)$ (simple), 
\item[$\bullet$] $\nabla(\lambda)$ (costandard/induced), 
\item[$\bullet$] $\Delta(\lambda)$ (standard/Weyl), 
\item[$\bullet$] $T(\lambda)$ (indecomposable tilting).
\end{itemize} 

Let $F^{r}:G\rightarrow G$ be the $r$th iteration of the  Frobenius morphism, and $G_{r}$ be the scheme theoretic kernel of this map which is often called the 
$r$th Frobenius kernel. One can restrict $F$ to the subgroups $B$, $T$ and $U$ to obtain $B_{r}$, $T_{r}$ and $U_{r}$. Set $G_{r}T=(F^{r})^{-1}(T)$. 

Let $\text{Mod}(G)$ be the category of rational $G$-modules. Given $M\in \text{Mod}(G)$, one can compose the representation map with $F^{r}$ to obtain a 
module $M^{(r)}$ where $G_{r}$ acts trivially and the $G$ action is obtained via inflation. Moreover, if $N$ is a $G/G_{r}\cong G^{(r)}$-module then one can untwist the action 
$N^{(-1)}$ to obtain a $G$-module. 

The category of $G_{r}$-modules is equivalent to the category of modules for $\text{Dist}(G_{r})$ which is a finite-dimensional cocommutative Hopf algebra. 
Let $Q_{r}(\lambda)$ denote the projective cover (equivalently, injective hull) of $L(\lambda)$ as a $G_{r}$-module, $\lambda\in X_{r}$.  

\subsection{Spectral Sequences}\label{S:spectral}  In this section, we will present the spectral sequences that will be used for our computational purposes for $\HH(G_{r})$. Note that 
the spectral sequences hold in higher generality for different algebraic group schemes and more general modules. For this paper, we have chosen to state the theorem for specific modules 
that are germane to our situation. Throughout this section, $G$ will be a reductive algebraic group scheme and $B$ will be a Borel subgroup scheme in $G$. 

The first two spectral sequences hold for all $r\geq 1$ and all primes $p>0$. 

\begin{theorem}\label{T:spectral1}  Let $G$ be a reductive algebraic group scheme and $B$ be a Borel subgroup scheme. There exists a first quadrant spectral sequence 
$$E_{2}^{i,j}=R^{i}\operatorname{ind}_{B}^{G} \Hh^{\bullet}(B_{r},\DGr) \Rightarrow \HH^{i+j}(G_{r})$$ 
\end{theorem}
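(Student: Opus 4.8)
The plan is to realize the spectral sequence as the Grothendieck spectral sequence of a composite of two functors. Regard $\DGr$ as a rational $G$-module via the (conjugation) adjoint action of $G$ on $\operatorname{Dist}(G_{r})$, which extends the $G_{r}$-action; then, using the identification $\HH^{\bullet}(G_{r})\cong\Hh^{\bullet}(G_{r},\DGr)$ recorded above, the abutment is $R^{\bullet}\big((-)^{G_{r}}\big)$ evaluated at $\DGr$, where $(-)^{G_{r}}$ is the $G_{r}$-fixed-point functor on $\operatorname{Mod}(G)$. Consider the two left exact functors $F=(-)^{B_{r}}\circ\Res^{G}_{B}\colon\operatorname{Mod}(G)\to\operatorname{Mod}(B)$, whose right derived functors are $\Hh^{j}(B_{r},-)$, and $\operatorname{ind}_{B}^{G}\colon\operatorname{Mod}(B)\to\operatorname{Mod}(G)$, whose right derived functors are $R^{i}\operatorname{ind}_{B}^{G}$. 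Once I show that $\operatorname{ind}_{B}^{G}\circ F\cong(-)^{G_{r}}$ and that $F$ carries injectives to $\operatorname{ind}_{B}^{G}$-acyclics, the Grothendieck spectral sequence yields $E_{2}^{i,j}=R^{i}\operatorname{ind}_{B}^{G}\,\Hh^{j}(B_{r},\DGr)\Rightarrow\HH^{i+j}(G_{r})$; it is first quadrant since both $R^{i}\operatorname{ind}_{B}^{G}$ and $\Hh^{j}(B_{r},-)$ vanish in negative degrees.

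The conceptual heart is the composite-functor identity $\operatorname{ind}_{B}^{G}\big(M^{B_{r}}\big)\cong M^{G_{r}}$, natural in the $G$-module $M$, which I would prove geometrically. Writing $\operatorname{ind}_{B}^{G}(N)=\Gamma(G/B,\mathcal{L}(N))$ for the associated bundle $\mathcal{L}(N)=G\times_{B}N$, the fact that $M$ is a $G$-module trivializes $\mathcal{L}(M)\cong\mathcal{O}_{G/B}\otimes M$ via $(g,m)\mapsto(gB,g\cdot m)$. Under this trivialization the subbundle $\mathcal{L}(M^{B_{r}})\subseteq\mathcal{L}(M)$ has fiber $g\cdot M^{B_{r}}=M^{\,gB_{r}g^{-1}}$ over the point $gB$ (well defined since $B$ normalizes $B_{r}$). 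Because $G/B$ is projective and connected, every global section of $\mathcal{O}_{G/B}\otimes M$ is constant, equal to some $m\in M$; such an $m$ is a section of the subbundle precisely when it lies in every fiber, i.e. when $m\in\bigcap_{g\in G}M^{\,gB_{r}g^{-1}}$. Since the conjugates $gB_{r}g^{-1}$ generate $G_{r}$ as a subgroup scheme (the Borel subgroups cover $G$), this intersection is exactly $M^{G_{r}}$, giving the claimed identity and, in particular, recovering $\operatorname{ind}_{B}^{G}(k)=k$ when $M=k$.

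It remains to verify the acyclicity hypothesis of the Grothendieck machine. Here I would use that restriction $\operatorname{Mod}(G)\to\operatorname{Mod}(B)$ preserves injectives, so an injective $G$-module $I$ is injective over $B$ and hence over $B_{r}$ (as $\operatorname{Dist}(B)$ is free over $\operatorname{Dist}(B_{r})$); thus $\Hh^{j}(B_{r},I)=0$ for $j>0$ and $F(I)=I^{B_{r}}$. The point to check is that these $B$-modules $I^{B_{r}}$ are $\operatorname{ind}_{B}^{G}$-acyclic; this follows from the good-filtration properties of injective $G$-modules together with Kempf-type vanishing for induction from $B$. I expect this acyclicity, rather than the composite identity, to be the main technical obstacle, since the fixed-point subbundle $\mathcal{L}(I^{B_{r}})$ need not inherit the vanishing of higher cohomology enjoyed by the ambient trivial bundle $\mathcal{O}_{G/B}\otimes I$, so its higher cohomology must be controlled by hand. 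With these two inputs in place, the Grothendieck spectral sequence produces the asserted first-quadrant spectral sequence abutting to $\HH^{i+j}(G_{r})$.
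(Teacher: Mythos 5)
Your overall plan (factor the fixed-point functor through $B$ and run the Grothendieck spectral sequence) is the right family of ideas, and your geometric proof of the composite identity $\operatorname{ind}_{B}^{G}(M^{B_{r}})\cong M^{G_{r}}$ for $G$-modules $M$ is essentially correct. But the proposal contains one false lemma and one fatal gap. The false lemma: restriction $\operatorname{Mod}(G)\to\operatorname{Mod}(B)$ does \emph{not} preserve injectives. Indeed $k[G]$ is not injective over $B$: $\operatorname{Ext}^{1}_{B}(k_{\alpha},k[G])\cong \operatorname{H}^{1}(B,k[G]\otimes k_{-\alpha})\cong R^{1}\operatorname{ind}_{B}^{G}(-\alpha)\cong \operatorname{ind}_{B}^{G}(s_{\alpha}\cdot(-\alpha))=k\neq 0$. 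Restriction preserves injectives for subgroup schemes $H$ with $G/H$ affine; this applies to $B_{r}$ (so your conclusion $\operatorname{H}^{j}(B_{r},I)=0$ for $j>0$ survives, by a different argument), but not to $B$, precisely because $G/B$ is projective.

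The fatal gap is the acyclicity you defer to the end: it is not "the main technical obstacle to be controlled by hand" --- it is false, and no argument can establish it. Here is why. Your composite identity is true, and the identifications of derived functors ($R^{\bullet}$ of $(-)^{B_{r}}\circ\operatorname{Res}$ with $\operatorname{H}^{\bullet}(B_{r},-)$, and $R^{\bullet}(-)^{G_{r}}$ with $\operatorname{H}^{\bullet}(G_{r},-)$) are correct. So if the acyclicity also held, Grothendieck's theorem would produce, for \emph{every} rational $G$-module, the spectral sequence exactly as you wrote it, with the honest functor $R^{i}\operatorname{ind}_{B}^{G}$ applied to $\operatorname{H}^{j}(B_{r},-)$ viewed as a $B$-module by inflation. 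Test this on $M=k$, $G=SL_{2}$, $p\geq 3$, $r=1$: one has $\operatorname{H}^{1}(B_{1},k)=0$ and $\operatorname{H}^{2}(B_{1},k)\cong \mathfrak{u}^{*(1)}=k_{p\alpha}$, so $E_{2}^{0,2}=\operatorname{ind}_{B}^{G}(p\alpha)=\nabla(2p)$, and this term admits no nonzero differentials (the odd rows vanish and $E_{2}^{i,0}=R^{i}\operatorname{ind}_{B}^{G}(k)=0$ for $i>0$ by Kempf vanishing). Hence the abutment in total degree $2$ would have dimension at least $2p+1$; but $\operatorname{H}^{2}(G_{1},k)\cong \mathfrak{g}^{*(1)}$ is $3$-dimensional by the Andersen--Jantzen computation quoted in Section 4 of the paper. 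The source of the failure is Frobenius-twist bookkeeping: $\operatorname{H}^{j}(B_{r},M)$ is a module for $B/B_{r}\cong B^{(r)}$, and the theorem must be read with $R^{i}\operatorname{ind}_{B/B_{r}}^{G/G_{r}}$ (equivalently: untwist by $(-r)$, apply $R^{i}\operatorname{ind}_{B}^{G}$, retwist --- this is how the paper uses the result later, with the $(-1)$-twists in its $SL_{2}$ section). These two inductions genuinely differ: $\operatorname{ind}_{B}^{G}(p\lambda)=\nabla(p\lambda)$, whereas $\operatorname{ind}_{B/B_{r}}^{G/G_{r}}$ produces $\nabla(\lambda)^{(1)}$. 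Your setup proves a false statement whose degree-zero part happens to be true, which is exactly why the error is invisible in your composite identity and surfaces only in the acyclicity.

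For comparison, the argument the paper is invoking (it states the theorem as standard machinery, in the spirit of Jantzen I.6.12 and Bendel--Nakano--Pillen) works on $\operatorname{Mod}(B)$ rather than $\operatorname{Mod}(G)$, using the commutation identity $(\operatorname{ind}_{B}^{G}N)^{G_{r}}\cong \operatorname{ind}_{B/B_{r}}^{G/G_{r}}(N^{B_{r}})$. One forms the two Grothendieck spectral sequences attached to the two composites in this identity; here the acyclicity inputs are the true ones: $\operatorname{ind}_{B}^{G}$ preserves injectives (right adjoint to exact restriction), and $(-)^{B_{r}}$ sends injective $B$-modules to injective $B/B_{r}$-modules (right adjoint to exact inflation). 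Evaluating at $N=M|_{B}$ with $M=\operatorname{Dist}(G_{r})_{\operatorname{ad}}$, the spectral sequence for $(-)^{G_{r}}\circ\operatorname{ind}_{B}^{G}$ collapses because $R^{j}\operatorname{ind}_{B}^{G}(M|_{B})\cong M\otimes R^{j}\operatorname{ind}_{B}^{G}(k)=0$ for $j>0$ (tensor identity plus Kempf vanishing), so its abutment is $\operatorname{H}^{\bullet}(G_{r},M)=\operatorname{HH}^{\bullet}(G_{r})$; the other spectral sequence is then the assertion of the theorem, in its correctly twisted form. If you wish to keep your one-composite formulation, you must at minimum replace $\operatorname{ind}_{B}^{G}$ by $\operatorname{ind}_{B/B_{r}}^{G/G_{r}}$ throughout; the corresponding acyclicity of $I^{B_{r}}$ is then true, but the easiest proof of it is precisely the two-spectral-sequence comparison just described.
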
 

\begin{theorem}\label{T:spectral2}  Let $B$ be a Borel subgroup scheme with $B=T\ltimes U$ where $T$ is a maximal torus and $U$ is the unipotent radical of $B$. 
\begin{itemize} 
\item[(a)] There exists a spectral sequence 
$$E_{2}^{i,j}=\Hh^{i}(T_{r},\Hh^{j}(U_{r},\DGr))\Rightarrow \Hh^{i+j}(B_{r},\DGr).$$
\item[(b)] One has an isomorphism of $B$=modules, 
$$ \Hh^{\bullet}(B_{r},\DGr)\cong \Hh^{\bullet}(U_{r},\DGr)^{T_{r}}.$$ 
\end{itemize} 
\end{theorem}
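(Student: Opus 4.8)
The plan is to read part (a) off the general Lyndon--Hochschild--Serre (LHS) spectral sequence for group schemes, and then to derive part (b) by showing that this spectral sequence degenerates because $T_{r}$ is linearly reductive. For (a), I would first record the structural fact that the semidirect decomposition $B = T \ltimes U$ passes to Frobenius kernels: applying $\ker(F^{r},-)$ gives $B_{r} = T_{r} \ltimes U_{r}$, so that $U_{r}$ is a normal subgroup scheme of $B_{r}$ with quotient $B_{r}/U_{r} \cong T_{r}$. With this in hand, the asserted spectral sequence is exactly the LHS spectral sequence (cf. \cite{rags}) attached to the normal subgroup scheme $U_{r} \triangleleft B_{r}$ and the $B_{r}$-module $\DGr$,
$$\Hh^{i}(B_{r}/U_{r},\Hh^{j}(U_{r},\DGr))\Rightarrow \Hh^{i+j}(B_{r},\DGr),$$
after identifying $B_{r}/U_{r}\cong T_{r}$.

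For (b), the key input is that $T_{r}$ is linearly reductive. Indeed, $T_{r}$ is the $r$th Frobenius kernel of a torus, hence a finite diagonalizable (multiplicative) group scheme isomorphic to $\mu_{p^{r}}^{\,l}$; its representation category is that of $\bigl(X(T)/p^{r}X(T)\bigr)$-graded vector spaces, which is semisimple. Consequently $\Hh^{i}(T_{r},-)$ vanishes for $i>0$ and equals the fixed-point functor $(-)^{T_{r}}$ for $i=0$. Feeding this into the spectral sequence of part (a) kills every row except $i=0$, so the sequence collapses at the $E_{2}$-page, and the edge isomorphism yields $\Hh^{\bullet}(B_{r},\DGr)\cong \Hh^{\bullet}(U_{r},\DGr)^{T_{r}}$ as graded vector spaces.

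It remains to upgrade this to an isomorphism of $B$-modules, which I expect to be the step requiring the most care. The left-hand side is a $B$-module because $B_{r}\triangleleft B$ and $\DGr$ is a $B$-module, so $\Hh^{\bullet}(B_{r},\DGr)$ is naturally a $B/B_{r}$-module. For the right-hand side the essential observation is that, although $T_{r}$ is \emph{not} normal in $B$, the relevant copy of $T_{r}$ is $B_{r}/U_{r}$: from $U_{r}\triangleleft B$ and $B_{r}\triangleleft B$ one gets $B_{r}/U_{r}\triangleleft B/U_{r}$, with $B_{r}/U_{r}\cong T_{r}$. Hence $\Hh^{\bullet}(U_{r},\DGr)$ is a $B/U_{r}$-module whose $T_{r}=B_{r}/U_{r}$ invariants form a $B/U_{r}$-submodule on which $B_{r}/U_{r}$ acts trivially, i.e. a $B/B_{r}$-module. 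Because $U_{r},B_{r}\triangleleft B$ and the coefficients $\DGr$ are a $B$-module, the whole LHS spectral sequence of part (a) is naturally a spectral sequence of $B$-modules (equivalently, of $B/B_{r}$-modules), so its collapse and the resulting edge isomorphism are $B$-equivariant.

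The one point to treat carefully is precisely this identification of the $T_{r}$-action used to form the invariants with the normal subgroup $B_{r}/U_{r}$ of $B/U_{r}$: it is this identification that makes $(-)^{T_{r}}$ produce a genuine $B$-module rather than merely a $T$-module, and therefore underlies the $B$-equivariance of the degeneration. Once this is in place the two $B$-module structures on the collapsed spectral sequence match, and the isomorphism of part (b) follows.
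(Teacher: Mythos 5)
Your proposal is correct and is essentially the argument the paper relies on: the paper states this theorem without proof as standard machinery, and the standard justification (cf.\ \cite[I.6.6, I.9.5]{rags}) is exactly your route --- the Lyndon--Hochschild--Serre spectral sequence for $U_{r}\triangleleft B_{r}$ with quotient $T_{r}$, followed by collapse at $E_{2}$ because the diagonalizable group scheme $T_{r}\cong\mu_{p^{r}}^{l}$ is linearly reductive. Your care in identifying the invariants $(-)^{T_{r}}$ with invariants under the normal subgroup scheme $B_{r}/U_{r}\triangleleft B/U_{r}$, which is what makes the isomorphism in (b) one of $B$-modules (via $B/B_{r}$) rather than merely of graded vector spaces, is precisely the right point to emphasize.
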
 

There is a spectral sequence that can be used to compute $\Hh^{\bullet}(U_{r},\DGr)$ when $p\geq 3$. However, when $r>2$, the spectral sequence cannot be regraded to make it in the first quadrant. This causes difficulties in making explicit computations. We state the case when $r=1$ where the spectral sequence is in the first quadrant, allowing us to make 
explicit computations. Also included in the discussion is an outline of the strategy for the use of the spectral sequence machinery.

\begin{theorem}\label{T:spectral3} Let $G$ be a reductive algebraic group scheme, with $r=1$ and $p\geq 3$. 
\begin{itemize} 
\item[(a)] There exists a first quadrant spectral sequence 
$$E_{2}^{2i,j}=S^{i}({\mathfrak u}^{*})^{(1)}\otimes \Hh^{j}({\mathfrak u},\DGone)\Rightarrow \Hh^{i+j}(U_{1},\DGone).$$ 
\item[(b)] There exists a first quadrant spectral sequence 
$$E_{2}^{2i,j}=S^{i}({\mathfrak u}^{*})^{(1)}\otimes \Hh^{j}({\mathfrak u},\DGone)^{T_{1}}\Rightarrow \Hh^{i+j}(B_{1},\DGone).$$ 
\end{itemize} 
\end{theorem}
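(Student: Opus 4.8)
The plan is to obtain part (a) as the specialization to $M=\DGone$ of the standard May-type spectral sequence for the cohomology of a height-one infinitesimal group scheme (in the form established by Friedlander--Parshall, cf.\ \cite{FP86}), and then to deduce part (b) by passing to $T_{1}$-fixed points. For part (a), recall that $\operatorname{Dist}(U_{1})$ is the restricted enveloping algebra $u(\mathfrak{u})$ of $\mathfrak{u}=\operatorname{Lie}(U)$, so that $\Hh^{\bullet}(U_{1},M)=\Ext^{\bullet}_{u(\mathfrak{u})}(k,M)$. I would build the spectral sequence from a filtered resolution of $k$ over $u(\mathfrak{u})$: equip $u(\mathfrak{u})$ with the filtration whose associated graded is the restricted enveloping algebra of $\mathfrak{u}$ made abelian with trivial $p$-operation, i.e.\ a truncated polynomial algebra. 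In this abelian model the cohomology is $S^{\bullet}(\mathfrak{u}^{*})^{(1)}\otimes\Lambda^{\bullet}(\mathfrak{u}^{*})$, where the exterior generators sit in cohomological degree $1$ and the polynomial generators --- the Bocksteins of the degree-one classes --- sit in degree $2$ and are Frobenius twisted. Tensoring with the coefficient module $M$ gives the $E_{1}$-page $E_{1}^{2i,j}=S^{i}(\mathfrak{u}^{*})^{(1)}\otimes\Lambda^{j}(\mathfrak{u}^{*})\otimes M$. Since $\mathfrak{u}$ is finite dimensional and both factors are concentrated in non-negative degrees, the (regraded) spectral sequence lies in the first quadrant.

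The $d_{1}$-differential is trivial on the Frobenius-twisted polynomial factor $S^{\bullet}(\mathfrak{u}^{*})^{(1)}$ and is identified with the Chevalley--Eilenberg differential on the tensor factor $\Lambda^{\bullet}(\mathfrak{u}^{*})\otimes M$. Taking homology in the $d_{1}$-direction therefore produces $E_{2}^{2i,j}=S^{i}(\mathfrak{u}^{*})^{(1)}\otimes\Hh^{j}(\mathfrak{u},M)$, which is exactly the asserted $E_{2}$-page with $M=\DGone$. The entire construction is natural in $M$ and equivariant for the conjugation action of $B$, which normalizes $U_{1}$, so the spectral sequence is one of $B$-modules. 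The hypothesis $p\geq 3$ enters precisely here: it guarantees that in the abelian model the exterior and polynomial parts of the cohomology separate and that the degree-two Bockstein classes are genuine polynomial generators, so that the $E_{1}$-page factors as the stated tensor product; in characteristic $2$ these parts interact and the clean decomposition fails.

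For part (b), I would apply the functor of $T_{1}$-fixed points to the spectral sequence of part (a). Since $T_{1}$ is linearly reductive (diagonalizable), taking $T_{1}$-invariants is exact and hence commutes with forming the pages of the spectral sequence. By \cref{T:spectral2}(b) one has $\Hh^{\bullet}(B_{1},\DGone)\cong\Hh^{\bullet}(U_{1},\DGone)^{T_{1}}$, so the abutment becomes $\Hh^{i+j}(B_{1},\DGone)$. On the $E_{2}$-page, $T_{1}$ acts trivially on the Frobenius twist $S^{i}(\mathfrak{u}^{*})^{(1)}$, since any Frobenius-twisted module is inflated from $T/T_{1}$; hence invariants affect only the second factor and yield $E_{2}^{2i,j}=S^{i}(\mathfrak{u}^{*})^{(1)}\otimes\Hh^{j}(\mathfrak{u},\DGone)^{T_{1}}$, as claimed.

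The step I expect to be the main obstacle is the construction and control of the filtered resolution underlying part (a) with nontrivial coefficients: one must verify both that the associated graded has the stated abelian form, so that the $E_{1}$-page splits as a tensor product with $M$, and that the induced $d_{1}$ is exactly the Chevalley--Eilenberg differential, all while tracking the $B$-equivariant structure. Once this identification is secured, the passage to $E_{2}$ and the deduction of part (b) are formal.
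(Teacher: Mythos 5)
Your construction is correct in outline, but you should know that the paper itself offers no proof of \cref{T:spectral3}: it is stated as a known result, and the standard references behind it (Friedlander--Parshall \cite{FP86}, Jantzen \cite[I.9]{rags}, and the discussion in \cite{DNN12}) establish it by exactly the route you describe --- the May filtration on $u({\mathfrak u})$ whose associated graded is the truncated polynomial algebra, identification of the first nonvanishing differential with the Chevalley--Eilenberg differential, and passage to $T_{1}$-invariants for part (b) using exactness of $(-)^{T_{1}}$ together with \cref{T:spectral2}(b). So your approach coincides with the one the paper implicitly invokes, and your treatment of part (b) (the $B$-stability of the filtration, triviality of the $T_{1}$-action on the Frobenius-twisted factor) is fine.

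One point deserves more care than you give it, and it is precisely the point the paper's surrounding discussion emphasizes. In the May filtration, the degree-two polynomial generators do \emph{not} sit in filtration degree $2$: being dual to the relations $x^{p}=0$, they sit in filtration degree $p$, with complementary degree $2-p$, so the native spectral sequence is not first quadrant, and your indexing $E_{1}^{2i,j}$ with ``$d_{1}=$ Chevalley--Eilenberg'' is literally inconsistent (in that indexing the Chevalley--Eilenberg differential has bidegree $(0,1)$, which is not the bidegree of any $d_{r}$). What makes the theorem true for $r=1$ and $p\geq 3$ is a regrading argument: a class $S^{i}\otimes\Lambda^{j}$ has (filtration degree, total degree) $=(pi+j,\,2i+j)$, and this pair determines $(i,j)$ because one can solve $i=(s-n)/(p-2)$ --- note this also uses $p>2$ --- so the intermediate differentials $d_{2},\dots,d_{p-2}$ vanish for degree reasons, the Chevalley--Eilenberg differential is the honest $d_{1}$ in the \emph{native} indexing, and after regrading the old $d_{p-1},d_{2p-3},\dots$ become the $d_{2},d_{3},\dots$ of a first-quadrant spectral sequence with the stated $E_{2}$-page. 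This regrading is exactly what the paper says fails for $r\geq 2$, so it should be carried out explicitly rather than treated as automatic; once it is, your sketch becomes the standard proof.
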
 

\subsection{Strategy for the Computations} The spectral sequences in Section~\ref{S:spectral} will be used as the primary apparatus to make computations for 
$\HH^{\bullet}(G_{r})$. When one replaces $\DGone$ by $k$, the cohomology ring $\Hh^{\bullet}(G_{1},k)$ for $p>h$ was determined using the strategy below  
by Andersen-Jantzen \cite{AJ84}. 

Ideally, one would like to prove that 
\begin{equation} \label{eq:induction} 
\HH^{\bullet}(G_{r})\cong \text{ind}_{B}^{G} \Hh^{\bullet}(B_{r},\DGr). 
\end{equation} 
Note that this is related to the induction conjecture stated in \cite[Section 2.3]{B24}. 

Set $r=1$ and $p\geq 3$. In order to prove (\ref{eq:induction}), one starts with analyzing the spectral sequence in Theorem~\ref{T:spectral3}(b). The key idea is to 
understand the structure of $\Hh^{\bullet}({\mathfrak u},\DGone)^{T_{1}}$. With enough information, one hopes to show that the spectral sequence in 
Theorem~\ref{T:spectral3}(b) collapses for $p>h$. This would yield the isomorphism 
\begin{equation}\label{E:B1coho} 
\Hh^{\bullet}(B_{1},\DGone)\cong  S^{\bullet}({\mathfrak u}^{*})^{(1)}\otimes \Hh^{\bullet}({\mathfrak u},\DGone)^{T_{1}}. 
\end{equation} 
This is true when  $\DGone$ is replaced by $k$. However, we will show that (\ref{E:B1coho}) does not hold for $G=SL_{2}$.  In fact, the spectral sequence 
Theorem~\ref{T:spectral3}(b) collapses at $E_{3}$, and one can show that 
\begin{equation}\label{E:B1coho} 
\Hh^{\bullet}(B_{1},\DGone)\cong  [S^{\bullet}({\mathfrak u}^{*})^{(1)}\otimes \Hh^{\bullet}({\mathfrak u},\DGone)^{T_{1}}]/I_{p}. 
\end{equation} 
See Theorem~\ref{T:B1-coho} for a description of $I_{p}$.  Note that at this point of stopping one could only conclude that this isomorphism holds as $S^{\bullet}({\mathfrak u}^{*})^{(1)}$-modules. In order to deduce that this is 
a ring isomorphism one needs to degrade the spectral sequence and use the techniques given by Drupieski, Nakano and Ngo (cf. \cite{DNN12}). 

The final step involves using a Grauert-Riemenschnieder type vanishing result, namely $R^{j}\text{ind}_{B}^{G}\ S^{\bullet}({\mathfrak u}^{*})\otimes \lambda=0$ for $\lambda \in X_{+}$ and $j>0$. 
If one can show that the weights of $\Hh^{\bullet}(B_{1},\DGone)$ have factors of the form $S^{\bullet}({\mathfrak u}^{*})\otimes \lambda$ ($\lambda\in X_{+}$), along some additional dominant weights
then it would follow that (\ref{eq:induction}) holds and as rings for $p>h$, 
\begin{equation} 
\HH^{\bullet}(G_{1})\cong \text{ind}_{B}^{G}\ [S^{\bullet}({\mathfrak u}^{*})^{(1)} \otimes \Hh^{\bullet}({\mathfrak u},\DGone)^{T_{1}}]/I_{p}. 
\end{equation}

\section{Taft algebra computations}\label{S:Taft}

In this section, we will compute the Hochschild cohomology for $B_{1}$ for $G=SL_{2}$. Our approach will utilize spectral sequences and we note that 
prior calculations of this type for the Taft algebra used techniques involving resolutions. [cite]
 
\subsection{} First note that the Hochschild cohomology $\HH^{\bullet}(B_{r})$ identifies with $\Hh^{\bullet}(B_{1},\DBone)$. The same 
arguments given in Section~\ref{S:spectral} can be used to construct the following spectral sequence.   

\begin{theorem} Let $p\geq 3$ and let $B$ be a Borel subgroup of $G$ corresponding to the negative roots. Then there exists a first quadrant spectral sequence 
$$E_{2}^{2i,j}=S^{i}({\mathfrak u}^{*})^{(1)}\otimes \Hh^{j}({\mathfrak u},\DBone)^{T_{1}}\Rightarrow \HH^{i+j}(B_{1}).$$ 
\end{theorem}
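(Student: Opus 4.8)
The plan is to mimic the derivation of Theorem~\ref{T:spectral3}(b), since the statement to be proved is exactly the Borel analogue with $\DGone$ replaced by $\DBone$. First I would observe that $\HH^{\bullet}(B_1) \cong \Hh^\bullet(B_1, \DBone)$ via the standard series of isomorphisms identifying Hochschild cohomology of a finite-dimensional cocommutative Hopf algebra with the cohomology of the adjoint module, as already recorded for $G_r$ in the introduction. This reduces the problem to producing a first quadrant spectral sequence converging to $\Hh^{i+j}(B_1, \DBone)$ with the stated $E_2$-page.

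Next I would apply Theorem~\ref{T:spectral2}(b) with the module $\DBone$ in place of $\DGone$, which gives the isomorphism of $B$-modules
\begin{equation*}
\Hh^\bullet(B_1, \DBone) \cong \Hh^\bullet(U_1, \DBone)^{T_1}.
\end{equation*}
The point of this step is to replace the computation over $B_1 = T_1 \ltimes U_1$ by a computation over the unipotent part $U_1$, with the torus $T_1$ acting as a semisimple operator so that taking $T_1$-invariants is exact and commutes with forming cohomology. So it suffices to build a first quadrant spectral sequence computing $\Hh^\bullet(U_1, \DBone)$ whose $E_2$-page, after passing to $T_1$-invariants, takes the asserted form.

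For this I would invoke the May-type (or Friedlander--Suslin style) spectral sequence for the first Frobenius kernel of a unipotent group in characteristic $p \geq 3$, precisely as in Theorem~\ref{T:spectral3}(a): filtering $\operatorname{Dist}(U_1)$ so that the associated graded is the tensor product of a polynomial part $S^\bullet(\mathfrak u^*)^{(1)}$ recording the Bocksteins on even cohomological degree and a Lie-algebra part $\Hh^\bullet(\mathfrak u, -)$ recording the restricted cohomology. This produces the first quadrant spectral sequence
\begin{equation*}
E_2^{2i,j} = S^i(\mathfrak u^*)^{(1)} \otimes \Hh^j(\mathfrak u, \DBone) \Rightarrow \Hh^{i+j}(U_1, \DBone).
\end{equation*}
The only genuine verification here is that the construction is functorial and $B$-equivariant for the adjoint module $\DBone$ exactly as it was for $\DGone$; since nothing in the derivation of Theorem~\ref{T:spectral3} used special features of $\DGone$ beyond its being a $B_1$-module with a compatible $B$-action, the same argument transfers verbatim. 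Finally I would take $T_1$-invariants throughout: because $T_1$ is linearly reductive (its order is prime to $p$ acting diagonalizably, and invariants are exact on the category of $T_1$-modules), the functor $(-)^{T_1}$ is exact and commutes with the spectral sequence, carrying the $E_2$-page to $S^i(\mathfrak u^*)^{(1)} \otimes \Hh^j(\mathfrak u, \DBone)^{T_1}$ and the abutment to $\Hh^{i+j}(U_1, \DBone)^{T_1} \cong \Hh^{i+j}(B_1, \DBone) \cong \HH^{i+j}(B_1)$, which is the claim.

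The main obstacle I anticipate is not any single deep step but rather the bookkeeping needed to confirm that all three ingredients remain compatible as $B$-module (and ring) constructions simultaneously: one must check that the $T_1$-action used to take invariants in the unipotent spectral sequence is compatible with both the internal grading by $S^\bullet(\mathfrak u^*)^{(1)}$ and the residual $B$-module structure coming from the adjoint action, so that the exactness of $(-)^{T_1}$ genuinely commutes with the differentials at every page. Once one is confident that $\DBone$ behaves formally like $\DGone$ under each of Theorems~\ref{T:spectral2} and~\ref{T:spectral3}, the result follows with essentially no new computation, which is why the statement can be asserted by appeal to ``the same arguments given in Section~\ref{S:spectral}.''
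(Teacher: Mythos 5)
Your proposal is correct and follows essentially the same route the paper intends: the theorem is asserted there by appeal to ``the same arguments given in Section~\ref{S:spectral},'' which is exactly what you reconstruct --- identify $\HH^{\bullet}(B_{1})$ with $\Hh^{\bullet}(B_{1},\DBone)$, apply Theorem~\ref{T:spectral2}(b) and the May-type spectral sequence of Theorem~\ref{T:spectral3}(a) with coefficients $\DBone$ in place of $\DGone$, and pass to $T_{1}$-invariants using exactness of $(-)^{T_{1}}$. One small correction to your justification: $(-)^{T_{1}}$ is exact because $T_{1}$ is a diagonalizable, hence linearly reductive, infinitesimal group scheme, not because its order is prime to $p$ --- indeed $\operatorname{Dist}(T_{1})$ has dimension $p^{\dim T}$.
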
 

\subsection{Rank One Case, $p\geq 3$} When $G=SL_{2}$, one has an isomorphism of $B_{1}$-modules under the adjoint representation: 
$$\DBone\cong u({\mathfrak b})_{\text{ad}}\cong \overline{S}({\mathfrak b}).$$ 
A basis for $\overline{S}^{n}({\mathfrak b})$ is given by $\langle h^{i}e^{j}:\ \ i+j=n,\ 0\leq i,j \leq p-1 \rangle$.  The monomial $h^{i}e^{j}$ has weight $-2j$. 
One can check that  $\overline{S}^{n}({\mathfrak b})$ is a cyclic $u({\mathfrak b})$-module that is 
generated by $h^{n}$ for $n=0,1,\dots,p-1$ and $e^{n-p+1}h^{p-1}$ for $n=p,p+1,\dots,2p-2$. The highest weight of  $\overline{S}^{n}({\mathfrak b})$ is $0$ for $n=0,1,\dots,p-1$ and 
$-2(n-p+1)$ for $n=p,p+1,\dots,2p-2$. 

These facts imply that one can write, 
\begin{equation}\label{E:sl2identify1} 
\overline{S}^{n}({\mathfrak b})\cong L(n)\otimes -n\ \ \ \ \ \ \ \text{$n=0,1,2,\dots,p-1$} 
\end{equation} 
as a $\text{Dist}(B_{1})$-module where $L(n)$ is a $\text{Dist}(G_{1})$ simple module of highest weight $n$. 
Now one has an isomorphism of  $\text{Dist}(B_{1})$-modules: 
\begin{equation}
\overline{S}^{n}({\mathfrak b})\cong \overline{S}^{2p-2-n}({\mathfrak b})\otimes -2(n-p+1)
\end{equation} 
for $n=p,p+1,\dots,2p-2$. 
By using (\ref{E:sl2identify1}), this translates to 
\begin{equation}\label{E:sl2identify2} 
\overline{S}^{n}({\mathfrak b})\cong L(2p-2-n)\otimes -n\ \ \ \ \ \ \ \ \text{$n=p,p+1,2p-2$} 
\end{equation} 

The next step is to focus on the calculation of $\Hh^{j}({\mathfrak u},\DBone)^{T_{1}}$. First note that $\dim {\mathfrak u}=1$, so $\Hh^{j}({\mathfrak u},\DBone)^{T_{1}}=0$ for $j\geq 2$. 
For $j=0$, one can see that the lowest weight of $\overline{S}^{n}(b)$ is $-2n$ for $n=0,1,\dots,p-1$ and $-2(p-1)$ for $n=p,p+1,\dots 2p-2$. This implies that 
\begin{equation} 
\Hh^{0}({\mathfrak u},\DBone)^{T_{1}}\cong \text{Hom}_{\mathfrak u}(k,\overline{S}({\mathfrak b}))^{T_{1}}\cong k.
\end{equation} 
Now Kostant's theorem (cf. \cite[Theorem 4.1.1]{UGA09}) for ${\mathfrak g}=\mathfrak{sl}_{2}$ says that as a $T$-module:  
\begin{equation}
\Hh^{1}({\mathfrak u},L(\lambda))\cong -s_{\alpha}\cdot \lambda=\lambda+2
\end{equation} 
for $\lambda=0,1,\dots,p-1$ where $s_{\alpha}\in W$.  
One can now apply this to (\ref{E:sl2identify1}) and (\ref{E:sl2identify2}) to deduce that as $T$-module 
\begin{equation} 
\Hh^{1}({\mathfrak u},\DBone)^{T_{1}}\cong \Hh^{1}({\mathfrak u},\overline{S}({\mathfrak b}^{*}))^{T_{1}}\cong 
\Hh^{1}({\mathfrak u},\overline{S}({\mathfrak b}^{*}))^{T_{1}}\cong k
\end{equation} 
One can now use this information to calculate $\HH^{\bullet}(B_{1})$. 

\begin{theorem} Let $p\geq 3$ and let  $B$ be a Borel subgroup of $SL_{2}$. Then as a $k$-algebra and as $T/T_1$-module, 
$$\HH^{\bullet}(B_{1})\cong S^{\bullet}({\mathfrak u}^{*})^{(1)}\otimes \Lambda^{\bullet}(x).$$ 
where the generators consisting of elements in ${\mathfrak u}^{*}$ are of degree $2$, and $x$ has degree $1$ and has weight $0$. 
\end{theorem}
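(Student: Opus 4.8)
The plan is to run the spectral sequence from the preceding theorem,
$$E_{2}^{2i,j}=S^{i}({\mathfrak u}^{*})^{(1)}\otimes \Hh^{j}({\mathfrak u},\DBone)^{T_{1}}\Rightarrow \HH^{i+j}(B_{1}),$$
and show it collapses. Since $\dim {\mathfrak u}=1$, the coefficient cohomology $\Hh^{j}({\mathfrak u},\DBone)^{T_{1}}$ vanishes for $j\geq 2$, so only the rows $j=0$ and $j=1$ survive. By the computations immediately preceding the statement, $\Hh^{0}({\mathfrak u},\DBone)^{T_{1}}\cong k$ and $\Hh^{1}({\mathfrak u},\DBone)^{T_{1}}\cong k$, the latter of weight $0$; write $x$ for a generator of this degree-one piece. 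Thus $E_{2}^{2i,0}\cong S^{i}({\mathfrak u}^{*})^{(1)}$ and $E_{2}^{2i,1}\cong S^{i}({\mathfrak u}^{*})^{(1)}\otimes kx$, and the entire $E_{2}$ page is concentrated in the even columns and in the two rows $j=0,1$.

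I would next establish collapse at $E_{2}$ on weight grounds. Because every nonzero term lies in an even column and in row $0$ or $1$, the only differential that can be nonzero is $d_{2}\colon E_{2}^{2i,1}\to E_{2}^{2i+2,0}$; every higher differential, and every odd differential, has source or target in an odd column or in a negative row. Now compare $T$-weights: $S^{i}({\mathfrak u}^{*})^{(1)}$ is concentrated in a single $T$-weight, and the weights for consecutive values of $i$ differ by the (nonzero) weight of $({\mathfrak u}^{*})^{(1)}$, namely $2p$. Since $x$ carries weight $0$, the source $E_{2}^{2i,1}$ and target $E_{2}^{2i+2,0}$ of $d_{2}$ differ in weight by $2p\neq 0$, and as $d_{2}$ is a map of $T$-modules it must vanish. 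Hence $E_{2}=E_{\infty}$, and reading off the contribution in each total degree yields the $T/T_{1}$-module isomorphism $\HH^{\bullet}(B_{1})\cong S^{\bullet}({\mathfrak u}^{*})^{(1)}\otimes(k\oplus kx)=S^{\bullet}({\mathfrak u}^{*})^{(1)}\otimes\Lambda^{\bullet}(x)$.

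Finally I would upgrade this to a $k$-algebra isomorphism using multiplicativity of the spectral sequence: $E_{\infty}=E_{2}$ is the bigraded tensor-product algebra $S^{\bullet}({\mathfrak u}^{*})^{(1)}\otimes\Lambda^{\bullet}(x)$, with the degree-two generators coming from ${\mathfrak u}^{*}$ in row $0$ and $x$ in row $1$. Because each total degree receives exactly one nonzero $E_{\infty}$ term, there are no additive extension problems, so it remains only to rule out hidden multiplicative extensions. The relation $x^{2}=0$ holds since $\HH^{\bullet}(B_{1})$ is graded-commutative and $p\neq 2$; alternatively $x^{2}$ would carry weight $0$ while $\HH^{2}(B_{1})\cong S^{1}({\mathfrak u}^{*})^{(1)}$ is concentrated in weight $2p$, again forcing $x^{2}=0$. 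The even subalgebra is literally the symmetric algebra $S^{\bullet}({\mathfrak u}^{*})^{(1)}$, hence free on the degree-two generators, so no further relations occur. Combining these gives the asserted algebra isomorphism.

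The routine part is the collapse, which is dictated entirely by the shape of the $E_{2}$ page together with the single weight comparison above. The only genuine subtlety is the passage from the associated graded to the actual ring structure; here it is cheap, because graded-commutativity (valid as $p\geq 3$) and the weight separation between $x^{2}$ and $\HH^{2}(B_{1})$ instantly kill the one potential hidden extension. I expect that in the higher-rank analogue this is precisely the step that becomes delicate, requiring the regrading/degrading technique of Drupieski--Nakano--Ngo (cf. \cite{DNN12}) in place of a one-line weight count.
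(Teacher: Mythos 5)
Your proposal is correct and follows essentially the same route as the paper: the same two-row spectral sequence, collapse at $E_{2}$ forced by the $T$-weight gap of $2p$ between the rows $j=1$ and $j=0$, and then an ungrading step resolved by weight considerations. Your explicit treatment of the multiplicative extension problem (graded-commutativity plus the weight separation showing $x^{2}=0$ and that the even part is exactly $S^{\bullet}({\mathfrak u}^{*})^{(1)}$) is just a self-contained version of the weight argument the paper carries out following \cite{DNN12}.
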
 

\begin{proof} One has a spectral sequence which is a slightly modified version of the one in Theorem~\ref{T:spectral3}(b), 
$$E_{2}^{2i,j}=S^{i}({\mathfrak u}^{*})^{(1)}\otimes \Hh^{j}({\mathfrak u},\DBone)^{T_{1}}\Rightarrow \Hh^{i+j}(B_{1},\DBone).$$ 
This spectral sequence lies on two lines for $j=0,1$. 

The first observation is that this spectral sequence collapses. One can see this by using the multiplicative structure of the spectral sequence. Since the spectral sequence is in the first quadrant, the terms $E_{2}^{i,0}$ must be all sent to zero under $d_{2}$. On the other hand, the $E_{2}$ terms is also generated by $E_{2}^{0,1}\cong 
\opH^{1}({\mathfrak u},\overline{S}(({\mathfrak b}))^{T_{1}}$, and $d_{2}:E_{2}^{0,1}\rightarrow E_{2}^{2,0}$. Now one uses the fact the differentials in the spectral sequence 
are $T$-module homomorphism, and weights of $E_{2}^{0,1}$ are $0$, and the weights of $E_{2}^{2,0}$ are $2p$. One can now conclude that $d_{2}=0$ and the spectral sequence collapses. 
Therefore, one has an isomorphism of $B$-algebras for the associated graded algebra obtained from filtration used in constructing the spectral sequence, 
\begin{equation} 
\text{gr } \HH^{\bullet}(B_{1})\cong S^{\bullet}({\mathfrak u}^{*})^{(1)}\otimes \Hh^{\bullet}({\mathfrak u},\overline{S}({\mathfrak b}))^{T_{1}}\cong 
S^{\bullet}({\mathfrak u}^{*})^{(1)}\otimes \Lambda^{\bullet}(x)
\end{equation} 
where $x$ is in degree $1$. 

In order to ``ungrade" the spectral sequence, one can use the ideas from \cite[proof of Theorem 3.1.1]{DNN12}. One needs to show that one can find a subalgebra in $ \HH^{\bullet}(B_{1})$ represented by elements of the 
form $1\otimes x$ in $1\otimes\Hh^{\bullet}({\mathfrak u},\overline{S}({\mathfrak b}))^{T_{1}}$. But, as mentioned beforehand the elements in $1\otimes \Hh^{\bullet}({\mathfrak u},\overline{S}({\mathfrak b}))^{T_{1}}$ 
all have weight zero and the product of any two of these elements cannot be represented in $S^{n}({\mathfrak u}^{*})^{(1)}\otimes  \Hh^{\bullet}({\mathfrak u},\overline{S}({\mathfrak b}))^{T_{1}}$ for 
$n>0$ due to weight considerations. The statement of the theorem now follows and the proof is complete. 
\end{proof} 

\subsection{Case when $p=2$} Note that in this case that $U_{1}$ acts trivially on $\overline{S}^{\bullet}({\mathfrak b})$. This shows that as rings 
$$\Hh^{\bullet}(U_{1},\overline{S}^{\bullet}({\mathfrak b}))\cong \Hh^{\bullet}(U_{1},k)\otimes \overline{S}^{\bullet}({\mathfrak b})\cong 
\Hh^{\bullet}(U_{1},k)\otimes \text{Dist}(B_{1})_{\text{ad}}.$$ 
In characteristic $2$, all the $T$-modules involved above have weights in the root lattice, so everything is $T_{1}$-invariant. 
One has $\opH^{\bullet}(B_{1},k)\cong S^{\bullet}({\mathfrak u}^{*})^{(1)}$, therefore we can conclude the following result. 

\begin{theorem} Let $p=2$ and let  $B$ be a Borel subgroup of $SL_{2}$. Then as a $k$-algebra and as $T/T_{1}$-module, 
$$\HH^{\bullet}(B_{1})\cong S^{\bullet}({\mathfrak u}^{*})^{(1)}\otimes \operatorname{Dist}(B_{1})_{\operatorname{ad}}$$ 
where the generators consisting of elements in ${\mathfrak u}^{*}$ are of degree $1$. 
\end{theorem}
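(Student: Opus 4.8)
The plan is to bypass the spectral-sequence machinery—which in the form of Theorem~\ref{T:spectral3} is only available for $p\geq 3$—and instead exploit the degeneration of the adjoint $U_1$-action in characteristic $2$ to compute $\HH^\bullet(B_1)$ directly. First I would record the two structural reductions: the identification $\HH^\bullet(B_1)\cong \Hh^\bullet(B_1,\DBone)$ from the opening of the section, followed by Theorem~\ref{T:spectral2}(b) applied with coefficients $\DBone$, which gives $\Hh^\bullet(B_1,\DBone)\cong \Hh^\bullet(U_1,\DBone)^{T_1}$. This reduces the problem to understanding $\Hh^\bullet(U_1,\DBone)$ together with its residual $T_1$-action. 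The key simplification is the observation already made in this subsection that for $p=2$ the group $U_1$ acts trivially on $\DBone\cong\overline{S}^\bullet(\mathfrak{b})$. Since the coefficients are then a trivial module and $k$ is a field, the cohomology splits as $\Hh^\bullet(U_1,\DBone)\cong \Hh^\bullet(U_1,k)\otimes_k\DBone$; because the trivial action is by algebra automorphisms, the cup product is computed through the multiplication of the coefficient algebra, so this is an isomorphism of $k$-algebras.

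Next I would handle the $T_1$-invariants and identify the unipotent factor. For $p=2$ every weight occurring in $\Hh^\bullet(U_1,k)$ and in $\overline{S}^\bullet(\mathfrak{b})$ lies in the root lattice, hence is trivial as a $T_1$-character; consequently the $T_1$-action on both tensor factors is already trivial and passing to invariants changes nothing. In particular $\Hh^\bullet(U_1,k)^{T_1}=\Hh^\bullet(U_1,k)\cong \Hh^\bullet(B_1,k)\cong S^\bullet(\mathfrak{u}^*)^{(1)}$, the degree-$1$ generators coming from $\Hh^\bullet(\mathbb{G}_{a(1)},k)$. Assembling the chain
$$\HH^\bullet(B_1)\cong \Hh^\bullet(U_1,\DBone)^{T_1}\cong \big[\Hh^\bullet(U_1,k)\otimes \DBone\big]^{T_1}\cong S^\bullet(\mathfrak{u}^*)^{(1)}\otimes \DBone$$
then yields the stated isomorphism, and tracking the equivariance throughout—trivial on $T_1$, hence factoring through $T/T_1$—produces the $T/T_1$-module structure.

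The step requiring the most care is the middle one: that $\Hh^\bullet(U_1,\DBone)\cong \Hh^\bullet(U_1,k)\otimes \DBone$ holds as graded $T$-\emph{algebras} and not merely as graded $T$-modules. One must verify that the cup product on the left is the one induced by the coefficient multiplication on $\DBone=\operatorname{Dist}(B_1)_{\operatorname{ad}}$, that the splitting is $T$-equivariant, and that taking $T_1$-invariants is compatible with this product. This is routine given the triviality of the $U_1$-action, so I expect no genuine obstruction; the real content of the theorem is the collapse of the computation that triviality of the adjoint action forces in characteristic $2$.
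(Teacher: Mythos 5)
Your proposal is correct and follows essentially the same route as the paper: both arguments rest on the triviality of the adjoint $U_{1}$-action on $\DBone\cong\overline{S}^{\bullet}({\mathfrak b})$ in characteristic $2$, the resulting ring isomorphism $\Hh^{\bullet}(U_{1},\DBone)\cong \Hh^{\bullet}(U_{1},k)\otimes \DBone$, the observation that all weights lie in the root lattice so that passing to $T_{1}$-invariants is vacuous, and the identification $\Hh^{\bullet}(B_{1},k)\cong S^{\bullet}({\mathfrak u}^{*})^{(1)}$ with degree-one generators. Your additional care in verifying that the K\"unneth-type splitting is an isomorphism of algebras and is $T$-equivariant simply makes explicit a step the paper leaves implicit; it is not a different method.
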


\subsection{Computation of $\HH^{\bullet}(U_{r})$} Once again assume that $U$ is the unipotent radical of $B$ in $G=SL_{2}$. The group $U$ is abelian and the adjoint action of 
$U_{r}$ on $\text{Dist}(U_{r})_{\text{ad}}$ is trivial. Therefore, one has the following isomorphism as a $T$-algebra. 
\begin{equation} 
\HH^{\bullet}(U_{r})\cong \Hh^{\bullet}(U_{r},k)\otimes \text{Dist}(U_{r})_{\text{ad}}.
\end{equation}
Since $U_{r}$ is abelian, the structure of  $\Hh^{\bullet}(U_{r},k)$ can be deduced from \cite[I 9.14 Proposition]{rags}. In particular for the $r=1$ case, one has the following.  
\begin{prop} Let $G=SL_{2}$. One has the isomorphisms as a ring and $T$-module. 
\begin{itemize} 
\item[(a)] For $p=2$, 
$$\HH^{\bullet}(U_{1})\cong S^{\bullet}({\mathfrak u}^{*})^{(1)}\otimes \operatorname{Dist}(U_{1})_{\operatorname{ad}}$$ 
where the generators in the symmetric algebra are in degree one. 
\item[(b)] For $p\geq 3$, 
$$\HH^{\bullet}(U_{1})\cong S^{\bullet}({\mathfrak u}^{*})^{(1)}\otimes \Lambda^{\bullet}({\mathfrak u}^{*})\otimes \operatorname{Dist}(U_{1})_{\operatorname{ad}}$$ 
where the generators in the symmetric algebra are in degree two, and the generators in the exterior algebra are in degree one. 
\end{itemize} 
\end{prop}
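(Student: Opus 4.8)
The plan is to build directly on the $T$-algebra decomposition recorded in the displayed isomorphism immediately preceding the statement. Since $U$ is abelian, conjugation on $U_{r}$ is trivial, so the adjoint action of $U_{r}$ on $\operatorname{Dist}(U_{r})$ is trivial; thus $\operatorname{Dist}(U_{r})_{\operatorname{ad}}$ is a trivial $U_{r}$-module while retaining its $T$-algebra structure. This is exactly what forces the Hochschild cohomology to split as a $T$-algebra
$$\HH^{\bullet}(U_{r})\cong \Hh^{\bullet}(U_{r},k)\otimes \operatorname{Dist}(U_{r})_{\operatorname{ad}},$$
with the tensor-product ring structure. Hence the entire content of the proposition reduces to computing $\Hh^{\bullet}(U_{1},k)$ together with its cohomological grading and its $T$-weights, after which the factor $\operatorname{Dist}(U_{1})_{\operatorname{ad}}$ is simply carried along. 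First I would record that for $G=SL_{2}$ one has $U_{1}\cong(\mathbb{G}_{a})_{1}$, so that $\operatorname{Dist}(U_{1})=u(\mathfrak{u})\cong k[e]/(e^{p})$ with $\mathfrak{u}=\langle e\rangle$ one-dimensional and $e$ of weight $-\alpha$.

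Next I would compute $\Hh^{\bullet}(U_{1},k)=\Ext^{\bullet}_{u(\mathfrak{u})}(k,k)$ from the minimal projective resolution of $k$ over the truncated polynomial ring $k[e]/(e^{p})$, which is the computation underlying \cite[I.9.14]{rags}. For $p\geq 3$ this resolution is $2$-periodic, its differentials alternating between multiplication by $e$ and by $e^{p-1}$; applying $\Hom_{u(\mathfrak{u})}(-,k)$ kills every differential, so $\Ext^{n}$ is one-dimensional in each degree and graded-commutativity gives the ring $\Lambda^{\bullet}(\lambda)\otimes k[\mu]$ with $\lambda$ in degree $1$ and $\mu$ in degree $2$. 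For $p=2$ the resolution is instead $1$-periodic (every differential is multiplication by $e$), so the cohomology ring is the polynomial ring $k[\lambda]$ on the single degree-one class. The $T$-equivariance of the resolution then fixes the weights: the degree-one class is dual to the differential $e$ and so spans $\mathfrak{u}^{*}$, while the degree-two periodicity class corresponds to the full relation $e^{p}$ and is therefore identified with the Frobenius twist $(\mathfrak{u}^{*})^{(1)}$. This produces $\Hh^{\bullet}(U_{1},k)\cong S^{\bullet}(\mathfrak{u}^{*})^{(1)}\otimes\Lambda^{\bullet}(\mathfrak{u}^{*})$ for $p\geq 3$ and $\Hh^{\bullet}(U_{1},k)\cong S^{\bullet}(\mathfrak{u}^{*})^{(1)}$ for $p=2$, with precisely the degree placement asserted in the statement.

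Finally I would substitute each of these two computations into the displayed decomposition to obtain the stated formulas in parts (b) and (a), respectively, as isomorphisms of rings and of $T$-modules.

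The main obstacle I anticipate is not the additive count but the $T$-equivariant and multiplicative bookkeeping. One must verify that the degree-two periodicity generator is genuinely the Frobenius twist $(\mathfrak{u}^{*})^{(1)}$ rather than an untwisted copy of $\mathfrak{u}^{*}$, and, in the degenerate case $p=2$, account for the collapse in which the exterior and polynomial generators of the odd-prime picture coincide on the single class $\lambda$ (with $\lambda^{2}$ playing the role of the twisted symmetric generator). Confirming that the cup-product structure on $\Ext^{\bullet}_{u(\mathfrak{u})}(k,k)$ agrees with the asserted tensor-product-of-algebras structure, rather than merely matching graded dimensions, is the step requiring the most care; for a truncated polynomial algebra this is classical and can be invoked from \cite[I.9.14]{rags}.
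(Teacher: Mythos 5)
Your proposal is correct and takes essentially the same route as the paper: the paper's own argument is precisely the observation that $U$ abelian forces the adjoint action of $U_{1}$ on $\operatorname{Dist}(U_{1})$ to be trivial, giving the $T$-algebra splitting $\HH^{\bullet}(U_{1})\cong \Hh^{\bullet}(U_{1},k)\otimes \operatorname{Dist}(U_{1})_{\operatorname{ad}}$, followed by an appeal to \cite[I 9.14 Proposition]{rags} for the structure of $\Hh^{\bullet}(U_{1},k)$. The only difference is that where the paper cites Jantzen, you re-derive that classical computation from the $T$-equivariant periodic minimal resolution of $k$ over $u({\mathfrak u})\cong k[e]/(e^{p})$, including the weight bookkeeping that places the degree-two generator in $({\mathfrak u}^{*})^{(1)}$; this is exactly the computation underlying the citation, so the two proofs coincide in substance.
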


\section{$SL_{2}$ computations}\label{S:SL2}

\subsection{Symmetric powers} Let ${\mathfrak g}=\text{Lie }G$ where $G$ is a semisimple simply connected algebraic group. If the characteristic of the field $k$ is good then 
$S^{\bullet}({\mathfrak g}^{*})$ has a good filtration \cite[4.4 Proposition]{AJ84}. Note when $p\geq 3$ for ${\mathfrak g}=\mathfrak{sl}_{2}$, one has ${\mathfrak g}^{*}\cong {\mathfrak g}$, thus 
$S^{\bullet}({\mathfrak g})$ has a good filtration. Furthermore, if ${\mathfrak g}$ is a tilting module then $S^{n}({\mathfrak g})$ is a tilting module for $0\leq n \leq p-1$ \cite[4.1(5)]{AJ84}. 
In the following theorem, we analyze these decompositions when $G=SL_{2}$. 

\begin{prop} \label{P:symmetricpowers} Let $G=SL_{2}$, ${\mathfrak g}=\mathfrak{sl}_{2}$ and $p\geq 3$.
\begin{itemize}
\item[(a)] In the Grothendieck group, one has 
$$[S^{n}({\mathfrak g})]=[\nabla(2n)]+[\nabla(2n-4)]+[\nabla(2n-8)]+\dots+[\nabla(2n-4q)]$$ 
where $q$ is the largest integer such that $2n-4q\geq 0$
\item[(b)] Let $p\geq 3$. 
\begin{itemize} 
\item[(i)] For $0\leq n \leq \frac{p-1}{2}$, there exists an isomorphism of $G$-modules
$$S^{n}({\mathfrak g})\cong T(2n)\cong L(2n).$$ 
\item[(ii)] For $\frac{p-1}{2}<n\leq p-1$, there exists an isomorphism of $G$-modules 
$$S^{n}({\mathfrak g})\cong T(2n)\oplus T(2n-4) \oplus \dots \oplus T(2n-4r)\oplus T(2n-4s)\oplus  T(2n-4(s+1)),\dots \oplus T(2n-4t)
$$ 
where $r$ is the largest integer with $2n-4r\geq p$, $s$ is the largest integer such that $2n-4s < 2p-2-2n$ and $t$ is the largest integer such that $2n-4t\geq 0$. 
\end{itemize} 
\end{itemize} 
\end{prop}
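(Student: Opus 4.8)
The plan is to separate the two parts: (a) is a characteristic-free computation of formal characters, while (b) promotes that computation to an honest decomposition of modules by exploiting the tilting property recorded just before the proposition. Throughout I will use that, for $p\geq 3$, the adjoint module $\mathfrak{g}=\mathfrak{sl}_{2}$ is the simple $3$-dimensional module $L(2)$, with $T$-weights $2,0,-2$, and that its character is independent of $p$.

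For part (a), I would compute $\operatorname{ch}S^{n}(\mathfrak{g})$ directly: a weight-$2m$ vector in $S^{n}(\mathfrak{g})$ is a monomial using $a,b,c$ basis vectors of weights $2,0,-2$ with $a+b+c=n$ and $a-c=m$, so for $0\leq m\leq n$ the weight $2m$ occurs with multiplicity $\lfloor (n-m)/2\rfloor+1$ (and symmetrically for $m<0$). Since $\{[\nabla(\mu)]\}_{\mu\geq 0}$ is a $\mathbb{Z}$-basis of the Grothendieck group, and in any finite-dimensional module the multiplicity of $\nabla(2m)$ equals (multiplicity of weight $2m$) $-$ (multiplicity of weight $2m+2$), these floors telescope: $\nabla(2m)$ occurs with multiplicity $1$ exactly when $n-m$ is even, and $0$ otherwise. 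Reading off $m=n,n-2,n-4,\dots\geq 0$ yields $[S^{n}(\mathfrak{g})]=[\nabla(2n)]+[\nabla(2n-4)]+\dots+[\nabla(2n-4q)]$, valid for all $p$.

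For part (b), the first step is that $\mathfrak{g}\cong L(2)=T(2)$ is tilting since its highest weight $2$ lies in the lowest alcove, so by \cite[4.1(5)]{AJ84} the power $S^{n}(\mathfrak{g})$ is tilting for $0\leq n\leq p-1$. A tilting module is determined up to isomorphism by its character and decomposes uniquely into indecomposables $T(\mu)$, because the transition matrix from $\{[T(\mu)]\}$ to $\{[\nabla(\mu)]\}$ is unitriangular for the dominance order; hence it suffices to match the $\nabla$-sum of part (a) against the $SL_{2}$ tilting characters $T(\mu)=\nabla(\mu)$ for $0\leq\mu\leq p-1$ and $T(\mu)=\nabla(\mu)+\nabla(2p-2-\mu)$ for $p\leq\mu\leq 2p-2$. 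For (i), when $n\leq (p-1)/2$ the top weight $2n\leq p-1$, so every constituent $\nabla(2n-4j)$ from (a) lies in the lowest alcove, where $\nabla=\Delta=L=T$; thus $S^{n}(\mathfrak{g})$ is the direct sum of the simple tiltings $T(2n-4j)=L(2n-4j)$, whose top term is $T(2n)=L(2n)$.

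The real content is part (ii), where $p\leq 2n\leq 2p-2$. Here I would run a greedy subtraction from the top: the highest surviving constituent $\mu$ forces a summand $T(\mu)$, and when $\mu\geq p$ its reflected Weyl factor $\nabla(2p-2-\mu)$ simultaneously cancels a strictly lower constituent before one passes to the next weight. Thus each high weight $2n-4i\geq p$ (for $0\leq i\leq r$) contributes $T(2n-4i)$ while absorbing the reflected factor $\nabla(2p-2-2n+4i)$, and the low constituents never absorbed in this way survive as simple summands $L(2n-4j)$. The main obstacle — indeed the only delicate point — is the bookkeeping that decides which low $\nabla$-factors are consumed as reflections of high tiltings and which persist, together with pinning down the resulting index ranges. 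This reduces to comparing two step-$4$ arithmetic progressions, the constituents of (a) and their images under reflection in the wall at $p-1$, and tracking exactly where the two overlap. The subtlety is that the reflected block can fall among the interior low constituents, so the surviving simple summands need not be contiguous; pinning down the precise boundaries — which is what the parameters $r$, $s$, $t$ are meant to encode — requires a careful case analysis according to the alignment of $2n$ against $p-1$, and this is the step I expect to be the most error-prone.
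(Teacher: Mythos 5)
Your strategy coincides with the paper's: for (a) an elementary character identity, and for (b) the tilting property of $S^{n}({\mathfrak g})$ for $0\leq n\leq p-1$ from \cite[4.1(5)]{AJ84} together with the $SL_{2}$ tilting characters ($[T(\mu)]=[\nabla(\mu)]$ for $0\leq\mu\leq p-1$, and $[T(\mu)]=[\nabla(\mu)]+[\nabla(2p-2-\mu)]$ for $p\leq\mu\leq 2p-2$, the latter via linkage), plus Krull--Schmidt and unitriangularity. The only difference in (a) is cosmetic: you compute the weight multiplicities $\lfloor (n-m)/2\rfloor+1$ and take successive differences, whereas the paper proves the recursion $\operatorname{ch}S^{n}({\mathfrak g})=\operatorname{ch}S^{n-2}({\mathfrak g})+\operatorname{ch}\nabla(2n)$ by splitting off the monomials with $h$-exponent $0$ or $1$; both arguments are correct.

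The place where your caution earns its keep is (b), and it is justified: the discrepancies lie in the proposition as stated, not in your method. In (i) your argument yields $S^{n}({\mathfrak g})\cong\bigoplus_{j\geq 0,\ 2n-4j\geq 0}T(2n-4j)$ with every summand simple; the literal claim $S^{n}({\mathfrak g})\cong T(2n)\cong L(2n)$ fails on dimensions already for $n=2$, $p\geq 5$ (here $\dim S^{2}({\mathfrak g})=6\neq 5=\dim L(4)$), and the paper's own appendix tables ($T(4)\oplus T(0)$ at $n=2$ for $p=5,7$) agree with you, not with the displayed statement. In (ii) the bookkeeping you flag as error-prone is precisely where the stated parameter ranges go wrong. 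Since $2p-2\equiv 0\pmod 4$, the reflected weights $2p-2-(2n-4i)$, $0\leq i\leq r$, lie in the same residue class mod $4$ as the constituents from (a) and form a consecutive step-$4$ block starting at $2p-2-2n$; its top is $p-3$ when $2n\equiv p+1\pmod 4$ and $p-5$ when $2n\equiv p+3\pmod 4$. In the first case the surviving low constituents are exactly those below $2p-2-2n$, as the proposition asserts (reading ``largest'' $s$ as ``smallest''). In the second case the Steinberg weight $p-1$ is a constituent sitting above the consumed block, so $T(p-1)$ is an additional summand that the proposition's list omits: for instance $S^{4}({\mathfrak g})\cong T(8)\oplus T(4)$ for $p=5$ and $S^{5}({\mathfrak g})\cong T(10)\oplus T(6)$ for $p=7$, again matching the appendix. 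So carry out your two-case comparison of the progressions exactly as you propose; the result is a corrected form of (b), obtained by the same route the paper takes but with the final arithmetic actually done.
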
 

\begin{proof} (a) Fix $n\geq 0$, then $S^{n}({\mathfrak g})$ has a good filtration since $p\geq 3$. One can show by induction that the character of $S^{n}({\mathfrak g})$ is the 
sum of the characters of $S^{n-2}({\mathfrak g})$ and $\nabla(2n)$. This can be seen as follows. One can multiply the elements of $S^{n-2}({\mathfrak g})$ by $h^{2}$ and this does not 
change the character, and yields a $T$-submodule of $S^{n}({\mathfrak g})$ with monomials of the form $e^{n_{1}}h^{m}f^{n_{2}}$ where 
$n_{1}=n_{2}$ and $n=n_{1}+n_{2}+m$ with $m\geq 2$. One can then prove directly that the other monomials where $m=0,1$ form a $T$-submodule with character 
$\nabla(2n)$. This proves the first statement. 

(b) For $p\geq 3$, ${\mathfrak g}$ is a tilting module for $G$. Therefore, $S^{n}({\mathfrak g})$ is a tilting module for $G$ when $0\leq n \leq p-1$. For (i), when 
 $0\leq n \leq \frac{p-1}{2}$, the highest weight is restricted (and in the bottom alcove), so $S^{n}({\mathfrak g})\cong T(2n)=L(2n)$. 
 
 The tilting module $T(\lambda)$ when $p\leq \lambda \leq 2p-2$ has a good filtration with two factors $\nabla(\lambda)$ and $\nabla(\mu)$ where $\lambda+\mu=2p-2$. 
This fact uses the linkage principle for $G$.  In case (ii), one 
 can use (a), to see that $S^{n}({\mathfrak g})$ will have tilting factors $T(2n), T(2n-4),\dots, T(2n-4r)$ where $r$ is the largest integer with $2n-4r\geq p$. Moreover, there will be possibly other simple tilting module 
 factors $T(2n-4s), T(2n-4(s+1)),\dots T(2n-4t)$ where $s$ is the largest integer such that $2n-4s < 2p-2-2n$ and $t$ is the largest integer such that $2n-4t\geq 0$. 
 
\end{proof} 

\subsection{Structure of Truncated Symmetric Powers} Let ${\mathfrak a}$ be an arbitrary restricted Lie algebra over $k$ with $\overline{S}({\mathfrak a}):=\overline{S}^{\bullet}({\mathfrak a})$ be the truncated symmetric algebra which is obtained by taking the quotient of the ordinary symmetric algebra $S^{\bullet}({\mathfrak a})$ with the ideal generated by $p$th powers. The Lie algebra ${\mathfrak a}$ acts on $\overline{S}({\mathfrak a})$ via the adjoint representation. The highest degree term $\overline{S}^{N}({\mathfrak a})$ is one-dimensional and occurs when $N=(p-1)\dim {\mathfrak a}$. Let $\chi$ denote this one-dimensional 
$u({\mathfrak a})$-module. 

For each $i=0,1,\dots, N$, there exists a $u({\mathfrak a})$ homomorphism  
\begin{equation}
(-,-): \overline{S}^{i}({\mathfrak a})\otimes  \overline{S}^{N-i}({\mathfrak a})\rightarrow \chi
\end{equation} 
given by $(f,g)=f\cdot g$. This pairing is non-degenerate in both variables so there exists an isomorphism of $u({\mathfrak a})$-module 
\begin{equation} 
\overline{S}^{i}({\mathfrak a})\cong \overline{S}^{N-i}({\mathfrak a})^{*}\otimes \chi. 
\end{equation} 
One can upgrade this to an $A$-module isomorphism when ${\mathfrak a}=\text{Lie }A$. 

For the case when $G=SL_{2}$, $\chi$ is the trivial module. Therefore, for 
$i=0,1,2,\dots,N$, one has an isomorphism as $G$-module: 
\begin{equation} \label{E:duality}
\overline{S}^{i}({\mathfrak g})\cong \overline{S}^{N-i}({\mathfrak g})^{*}. 
\end{equation} 

\subsection{The $G$-module structure of $\HH^{\bullet}(G_{1})$} Assume that $p\geq 3$. For $G=SL_2$, the weights of $\overline{S}({\mathfrak g})$ are less than $2(p-1)=2p-2$. So we can consider the 
truncated category ${\mathcal C}_{\pi}$ which is the full subcategory of $G$-modules whose composition factor have highest weights in $\pi=\{\lambda\in X^{+}:\ 0\leq \lambda \leq 2(p-1)\}$. 

Let $\overline{S}({\mathfrak g})_{0}$ be the projection of $\overline{S}({\mathfrak g})$ onto the principal block for $G$. This will be the same as the projection onto the $G_{1}$-block because 
$p$ is odd and all the weights of $\overline{S}({\mathfrak g})$ are even. The $G$-module, $S^{n}({\mathfrak g})$ has a good filtration and the induced modules in the principal $G$-block are $\nabla(0)$, $\nabla (2p-2)$ for $0\leq n\leq 3(p-1)$. Now one can use Proposition~\ref{P:symmetricpowers} and (\ref{E:duality}) to obtain the following calculation. 

\begin{prop} \label{P:S-factors} Let $p\geq 3$. The $G$-module structure on  $\overline{S}^{\bullet}({\mathfrak g})_{0}$ is given by 
\begin{itemize} 
\item[(a)] $\overline{S}^{n}({\mathfrak g})_{0}\cong k$ for $n$ even, $0\leq n< p-1$, $2(p-1)<n \leq 3(p-1)$. 
\item[(b)] $\overline{S}^{n}({\mathfrak g})_{0}\cong 0$ for $n$ odd, $0\leq n< p-1$, $2(p-1)<n \leq 3(p-1)$. 
\item[(c)] $\overline{S}^{n}({\mathfrak g})_{0}\cong T(2p-2)$ for $n$ even, $p-1\leq n \leq 2(p-1)$. 
\item[(d)] $\overline{S}^{n}({\mathfrak g})_{0}\cong L(2p-2)$ for $n$ odd, $p-1\leq n \leq 2(p-1)$. 
\end{itemize} 
\end{prop}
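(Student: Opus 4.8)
The plan is to first identify the principal block and then reduce everything to the middle range of degrees. By the linkage principle for $SL_{2}$, the only dominant weights in $\pi=\{0\le\lambda\le 2(p-1)\}$ linked to $0$ are $0$ and $2p-2$, so the principal block of ${\mathcal C}_{\pi}$ has exactly two simples, $L(0)=k$ and $L(2p-2)$, with $\nabla(2p-2)$ uniserial of factors $[L(2p-2),L(0)]$ and $T(2p-2)$ of character $2[L(0)]+[L(2p-2)]$. For the bottom range $0\le n<p-1$ there are no $p$-th power relations, so $\overline{S}^{n}({\mathfrak g})=S^{n}({\mathfrak g})$, and by the good filtration recorded in \cref{P:symmetricpowers} the unique principal-block section is a single copy of $\nabla(0)=k$, occurring exactly when $n$ is even; this yields (a) and (b) there. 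For the top range $2(p-1)<n\le 3(p-1)$ I would fold onto the bottom range using the duality (\ref{E:duality}), namely $\overline{S}^{n}({\mathfrak g})_{0}\cong(\overline{S}^{N-n}({\mathfrak g})_{0})^{*}$ with $N=3(p-1)$ even, together with $k^{*}=k$ and the $*$-stability of the principal block.

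The heart of the argument is the middle range $p-1\le n\le 2(p-1)$. Here I would use that $\{e^{p},h^{p},f^{p}\}$ is a regular sequence in $S^{\bullet}({\mathfrak g})$ spanning a $G$-submodule isomorphic to ${\mathfrak g}^{(1)}$ sitting in degree $p$. The associated Koszul complex is $G$-equivariant, and since $n<2p$ its higher terms vanish in internal degree $n$, collapsing to a short exact sequence
\[
0\to S^{n-p}({\mathfrak g})\otimes{\mathfrak g}^{(1)}\to S^{n}({\mathfrak g})\to \overline{S}^{n}({\mathfrak g})\to 0 .
\]
Applying the (exact) projection onto the principal block gives
\[
0\to (S^{n-p}({\mathfrak g})\otimes{\mathfrak g}^{(1)})_{0}\to (S^{n}({\mathfrak g}))_{0}\to \overline{S}^{n}({\mathfrak g})_{0}\to 0 .
\]
Using \cref{P:symmetricpowers} and the good filtration of $S^{n}({\mathfrak g})$, the middle term $(S^{n}({\mathfrak g}))_{0}$ is $\nabla(0),\nabla(2p-2)$-filtered (character $[T(2p-2)]$) when $n$ is even, and is a single $\nabla(2p)$ when $n$ is odd.

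The decisive computation is the left-hand term $(S^{n-p}({\mathfrak g})\otimes{\mathfrak g}^{(1)})_{0}$. Writing ${\mathfrak g}^{(1)}=L(2)^{(1)}=L(2p)$ and decomposing $S^{m}({\mathfrak g})$ with $m=n-p\le p-2$ into its tilting summands via \cref{P:symmetricpowers}, I would apply the Steinberg tensor product theorem and the $SL_{2}$ Clebsch--Gordan rule to read off the highest weights of each $T(2j)\otimes L(2)^{(1)}$ modulo $2p$. The outcome I expect is that the only summand contributing factors linked to $0$ is the trivial summand $k\subseteq S^{m}({\mathfrak g})$, which appears exactly when $m$ is even. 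Thus for odd $n$ (so $m$ even) the term equals $L(2p)$, and for even $n$ (so $m$ odd) it vanishes. For odd $n$ the inclusion then matches the socle $L(2p)$ of $\nabla(2p)$, so $\overline{S}^{n}({\mathfrak g})_{0}\cong\nabla(2p)/L(2p)\cong L(2p-2)$, which is (d). For even $n$ the sequence degenerates to $\overline{S}^{n}({\mathfrak g})_{0}\cong(S^{n}({\mathfrak g}))_{0}$; since ${\mathfrak g}\cong{\mathfrak g}^{*}$ the module $S^{n}({\mathfrak g})$ is self-dual, so $(S^{n}({\mathfrak g}))_{0}$ is self-dual with a good filtration, hence tilting, and its character forces it to be $T(2p-2)$, which is (c).

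The step I expect to be the main obstacle is precisely this weight bookkeeping for $(S^{n-p}({\mathfrak g})\otimes{\mathfrak g}^{(1)})_{0}$. The twist ${\mathfrak g}^{(1)}=L(2p)$ is not good-filtered, so the left-hand term of the sequence need not be, and the delicate point is to show that tensoring the non-simple tilting summands $T(2j)$ with $p\le 2j\le 2p-2$ by $L(2)^{(1)}$ produces only composition factors whose highest weights avoid $0$ and $2p-2$ modulo $2p$. Once that vanishing and the single identification with $L(2p)$ are established, the self-duality argument disposes of the even case and the socle argument disposes of the odd case, completing (a)--(d).
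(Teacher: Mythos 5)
Your argument takes a genuinely different route from the paper's, which simply combines \cref{P:symmetricpowers} with the duality (\ref{E:duality}) and leaves the verification to the reader. Most of your proof is correct: the identification of the principal block of ${\mathcal C}_{\pi}$, the bottom range, the folding of the top range via (\ref{E:duality}), the $G$-equivariant Koszul sequence $0\to S^{n-p}({\mathfrak g})\otimes{\mathfrak g}^{(1)}\to S^{n}({\mathfrak g})\to\overline{S}^{n}({\mathfrak g})\to 0$ (valid since $n<2p$ and $e^{p},h^{p},f^{p}$ is a regular sequence), the computation of $(S^{n}({\mathfrak g}))_{0}$, the block-vanishing bookkeeping for $(S^{n-p}({\mathfrak g})\otimes{\mathfrak g}^{(1)})_{0}$, and the socle argument settling the odd case all hold up under scrutiny.

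The genuine gap is in the even case of the middle range, where you assert that ${\mathfrak g}\cong{\mathfrak g}^{*}$ forces $S^{n}({\mathfrak g})$ to be self-dual. In characteristic $p$ this inference fails once $n\geq p$: one has $S^{n}(V)^{*}\cong\Gamma^{n}(V^{*})$ (divided powers), which is not $S^{n}(V^{*})$ in degrees $\geq p$. In fact the claim is false in a case you need: take $p=5$ and $n=6$. Then $S^{6}({\mathfrak g})$ contains $L(12)\cong{\mathfrak g}\otimes{\mathfrak g}^{(1)}$ (the degree-$6$ piece of the ideal of $p$-th powers) as a submodule, but admits no surjection onto $L(12)$: any such surjection would be nonzero on that submodule (the quotient $\overline{S}^{6}({\mathfrak g})$ has no composition factor $L(12)$), hence would split $L(12)$ off as a direct summand, contradicting the facts that $S^{6}({\mathfrak g})$ has a good filtration, that direct summands inherit good filtrations, and that $L(12)\neq\nabla(12)$ has none. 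So $S^{6}({\mathfrak g})$ is not self-dual, and self-duality cannot be invoked for the even middle degrees. The conclusion you want is still true, and your own tools prove it: for even $n$ in the middle range, $n-p$ is odd, so your Koszul sequence gives $\overline{S}^{n}({\mathfrak g})_{0}\cong (S^{n}({\mathfrak g}))_{0}$, which has a good filtration; applying the same reasoning at $N-n$ (also even and in the middle range) and invoking (\ref{E:duality}) yields $(\overline{S}^{n}({\mathfrak g})_{0})^{*}\cong\overline{S}^{N-n}({\mathfrak g})_{0}\cong (S^{N-n}({\mathfrak g}))_{0}$, which also has a good filtration. Thus $\overline{S}^{n}({\mathfrak g})_{0}$ has both a good and a Weyl filtration, hence is tilting, and its character $[\nabla(2p-2)]+[\nabla(0)]$ forces $\overline{S}^{n}({\mathfrak g})_{0}\cong T(2p-2)$, which is (c).
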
 

We now review the cohomological calculations given in \cite{AJ84}. First note that 
\begin{equation}
T(2p-2)|_{G_{1}}\cong Q_{1}(0)
\end{equation} 
where $Q_{1}(0)$ is the $G_{1}$- projective cover of the trivial module. 
Hence, 
\begin{equation} \label{E:coho1}
\opH^{n}(G_{1},T(2p-2))^{(-1)}= 
\begin{cases}
k & \text{for $n=0$}\\
0 &\text{ else.}
\end{cases}
\end{equation} 
Moreover, from \cite[Corollary 3.7, Section 3.9]{AJ84}, one has the computation of the cohomology ring for $G_{1}$ for $p\geq 3$: 
\begin{equation} \label{E:coho2}
\opH^{n}(G_{1},k)^{(-1)}= 
\begin{cases}
k[{\mathcal N}]_{\frac{n}{2}} & \text{for $n$ even}\\
0 & \text{for $n$ odd}.
\end{cases}
\end{equation} 
One has by Steinberg's tensor product formula: $L(2p-2)\cong L(p-2)\otimes L(1)^{(1)}$. Moreover, $\nabla(p-2)\cong L(2p-2)$, so by \cite[Corollary 3.7]{AJ84}, 
\begin{equation} \label{E:coho3} 
\opH^{n}(G_{1},L(2p-2))^{(-1)}= 
\begin{cases}
 \text{ind}_{B}^{G} [S^{\frac{n-1}{2}}({\mathfrak u}^{*})\otimes \omega] \otimes L(1) &  \text{for $n$ odd}\\
0 & \text{for $n$ even}
\end{cases}
\end{equation} 
We are now ready to provide the $G$-module structure for the Hochschild cohomology groups $\HH^{\bullet}(G_{1})$ for $G=SL_2$. 

\begin{theorem} Let $G=SL_{2}$ with $p\geq 3$. As a $G$-module, the Hochschild cohomology is isomorphic to the following modules.  
\begin{itemize}
\item[(a)] $\HH^{0}(G_{1})^{(-1)}\cong k^{\oplus \frac{p-1}{2}}$; 
\item[(b)] $\HH^{2\bullet}(G_{1})^{(-1)} \cong k[{\mathcal N}]_{\bullet}^{\oplus (p-1)}$ for $\bullet > 0$; 
\item[(c)] $\HH^{2\bullet+1}(G_{1})^{(-1)} \cong [\operatorname{ind}_{B}^{G} [S^{\bullet}({\mathfrak u}^{*})\otimes \omega]\otimes L(1)]^{\oplus \frac{p-1}{2}}$ for $\bullet \geq  0$. 
\end{itemize} 
\end{theorem}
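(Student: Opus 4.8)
The plan is to compute $\HH^\bullet(G_1)\cong\Hh^\bullet(G_1,\DGone)$ by exploiting the fact that, as a $G$-module, $\DGone\cong\overline{S}^\bullet(\mathfrak g)=\bigoplus_{m=0}^{3(p-1)}\overline{S}^m(\mathfrak g)$, so that cohomology decomposes as a direct sum over the internal degree $m$:
\[
\HH^n(G_1)^{(-1)}\cong\bigoplus_{m=0}^{3(p-1)}\opH^n\bigl(G_1,\overline{S}^m(\mathfrak g)\bigr)^{(-1)}.
\]
Since $\opH^\bullet(G_1,-)=\Ext^\bullet_{G_1}(k,-)$ only detects the block of the trivial module, each summand may be replaced by its projection $\overline{S}^m(\mathfrak g)_0$ onto the principal ($=G_1$-) block; I would record this vanishing against the non-principal blocks as a short preliminary observation. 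The payoff is that \cref{P:S-factors} already determines the $G$-module structure of every $\overline{S}^m(\mathfrak g)_0$: each is one of $k$, $0$, $T(2p-2)$, or $L(2p-2)$, sorted by the parity of $m$ together with which of the intervals $[0,p-1)$, $[p-1,2(p-1)]$, $(2(p-1),3(p-1)]$ contains $m$.

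With this in hand the computation becomes purely a matter of feeding the four module types into the three cohomology calculations imported from \cite{AJ84}. The zero summands contribute nothing; by \eqref{E:coho2} each trivial summand $k$ contributes $k[{\mathcal N}]_\bullet$ in even cohomological degrees and nothing in odd degrees; by \eqref{E:coho1} each tilting summand $T(2p-2)$ restricts to the injective $Q_1(0)$ and so contributes a single copy of $k$ concentrated in degree $0$; and by \eqref{E:coho3} each summand $L(2p-2)$ contributes $\operatorname{ind}_B^G[S^\bullet(\mathfrak u^*)\otimes\omega]\otimes L(1)$ in odd degrees and nothing in even degrees. Because the theorem asserts only an identification of $G$-modules and not of rings, I can work additively and avoid the multiplicative ``ungrading'' argument that was needed for $B_1$.

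The remaining work is bookkeeping: counting, in each regime, how many indices $m$ of each parity fall in each interval, and then summing the corresponding contributions. The odd-degree part receives contributions only from the $L(2p-2)$-summands, i.e.\ from the odd $m\in[p-1,2(p-1)]$; counting these and applying \eqref{E:coho3} with $(n-1)/2=\bullet$ yields part (c). The strictly positive even-degree part receives contributions only from the trivial summands, i.e.\ from the even $m$ in $[0,p-1)\cup(2(p-1),3(p-1)]$; counting these and applying \eqref{E:coho2} yields part (b).

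Part (a), the degree-zero term, is the only delicate point and is where I expect the main care to be needed: in cohomological degree $0$ \emph{both} the trivial summands (each giving $k[{\mathcal N}]_0\cong k$ via \eqref{E:coho2}) \emph{and} the tilting summands $T(2p-2)$ (each giving $k$ via \eqref{E:coho1}) contribute, so the final multiplicity is the sum of the two separate counts and one must be careful not to omit the $T(2p-2)$ contribution. Assembling the three counts then produces the asserted $G$-module structure. The genuinely substantial inputs—\cref{P:symmetricpowers}, the duality \eqref{E:duality}, the resulting block decomposition \cref{P:S-factors}, and the Andersen--Jantzen computations \eqref{E:coho1}--\eqref{E:coho3}—are all already in place, so the proof itself is the organized assembly described above.
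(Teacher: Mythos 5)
Your strategy is precisely the paper's own proof: reduce to $\opH^{\bullet}(G_{1},\overline{S}({\mathfrak g}))\cong\opH^{\bullet}(G_{1},\overline{S}({\mathfrak g})_{0})$, read the principal-block summands off \cref{P:S-factors}, feed the three types $k$, $T(2p-2)$, $L(2p-2)$ into \eqref{E:coho2}, \eqref{E:coho1}, \eqref{E:coho3}, and count multiplicities; your observation that only an additive (not multiplicative) identification is needed also matches the paper. For parts (b) and (c) the bookkeeping comes out as you say: in positive even degrees only the $p-1$ trivial summands (even $n$ in $[0,p-1)\cup(2(p-1),3(p-1)]$) contribute, giving $k[{\mathcal N}]_{\bullet}^{\oplus(p-1)}$, and in odd degrees only the $\frac{p-1}{2}$ summands isomorphic to $L(2p-2)$ (odd $n$ in $[p-1,2(p-1)]$) contribute, giving part (c).

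The gap is in part (a), at exactly the step you single out as delicate but never execute. By \cref{P:S-factors} there are $p-1$ trivial summands and $\frac{p+1}{2}$ summands isomorphic to $T(2p-2)$ (even $n$ in $[p-1,2(p-1)]$); each of the former contributes $k[{\mathcal N}]_{0}\cong k$ in degree zero by \eqref{E:coho2}, and each of the latter contributes $k$ by \eqref{E:coho1}, so the recipe you describe yields
$$\HH^{0}(G_{1})^{(-1)}\cong k^{\oplus(p-1)}\oplus k^{\oplus\frac{p+1}{2}}\cong k^{\oplus\frac{3p-1}{2}},$$
which is not the asserted $k^{\oplus\frac{p-1}{2}}$. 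The multiplicity $\frac{3p-1}{2}$ is the one consistent with the appendix tables (for $p=3$, degree-zero contributions come from $n=0,2,4,6$, i.e.\ four copies of $k$, while $\frac{p-1}{2}=1$; for $p=5$, from $n=0,2,4,6,8,10,12$, i.e.\ seven copies) and with the fact that $\HH^{0}(G_{1})$ is the center of $\operatorname{Dist}(G_{1})$: for $p=3$ the center of $u(\mathfrak{sl}_{2})$ is four-dimensional, spanned by $1$, $\delta$, $\delta^{2}$, $e^{2}h^{2}f^{2}$ with $\delta=h^{2}+ef$. So your method is sound (and is the paper's), but your concluding claim that assembling the counts ``produces the asserted $G$-module structure'' is false for (a): part (a) as printed is incompatible with \cref{P:S-factors}, and an honest execution of your plan produces the corrected multiplicity $\frac{3p-1}{2}$ rather than a proof of the stated one. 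A complete write-up must carry out this count explicitly and flag the discrepancy, not assert agreement with the statement.
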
 

\begin{proof} First recall that 
\begin{equation} 
\HH^{\bullet}(G_{1})=\text{Ext}^{\bullet}_{G_{1}}(k,\overline{S}({\mathfrak g}))\cong \opH^{\bullet}(G_{1},\overline{S}({\mathfrak g}))\cong 
\opH^{\bullet}(G_{1},\overline{S}({\mathfrak g})_{0}).
\end{equation} 
From Proposition~\ref{P:S-factors}, the possible non-zero $G$-summands of $\overline{S}({\mathfrak g})_{0}$ are $k$, $T(2p-2)$ and $L(2p-2)$, and the $G_{1}$-cohomology for these summands is given in 
(\ref{E:coho1}), (\ref{E:coho2}), and (\ref{E:coho3}). The statement of the theorem now follows by keeping track of the multiplicities of the $G$-factors which can be found in 
Proposition~\ref{P:S-factors}. 
\end{proof} 

The case when $p=2$ requires a slightly different argument. 

\begin{theorem}\label{T:G-module} Let $G=SL_{2}$ with $p=2$. As a $G$-module, the Hochschild cohomology is isomorphic to the following modules.  
\begin{itemize}
\item[(a)] $\HH^{0}(G_{1})^{(-1)}\cong k^{\oplus 3}\oplus L(1)$; 
\item[(b)] $\HH^{\bullet}(G_{1})^{(-1)} \cong k[{\mathcal N}]_{\bullet}^{\oplus 2}\oplus \operatorname{ind}_{B}^{G} [S^{\bullet}({\mathfrak u}^{*})^{(-1)}\otimes \omega] \oplus 
 \operatorname{ind}_{B}^{G} [S^{\bullet}({\mathfrak u}^{*})^{(-1)}\otimes -\omega]$ for $\bullet > 0$.
\end{itemize}
\end{theorem}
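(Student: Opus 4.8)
The plan is to mirror the characteristic $p\geq 3$ argument but to substitute the module data that is special to $p=2$. As before one starts from
$$\HH^{\bullet}(G_{1})\cong \opH^{\bullet}(G_{1},\overline{S}(\mathfrak g))=\bigoplus_{n=0}^{3}\opH^{\bullet}(G_{1},\overline{S}^{n}(\mathfrak g)),$$
the grading being $G$-stable, and where only $n=0,1,2,3$ occur since $N=(p-1)\dim\mathfrak g=3$. The first step is to determine the $G$-structure of each graded piece, the $p=2$ analogue of Proposition~\ref{P:S-factors}. One has $\overline{S}^{0}(\mathfrak g)=\overline{S}^{3}(\mathfrak g)=k$ by (\ref{E:duality}), while $\overline{S}^{1}(\mathfrak g)=\mathfrak g$. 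The characteristic-two input is that the toral element $h$ (with $h^{[2]}=h$) is central in $\mathfrak g$, so $kh$ is a trivial $G$-submodule with $\mathfrak g/kh\cong L(1)^{(1)}=L(2)$; checking that this extension is non-split, e.g. by computing the adjoint action of a root subgroup, identifies $\mathfrak g\cong\Delta(2)$, and then (\ref{E:duality}) gives $\overline{S}^{2}(\mathfrak g)\cong\mathfrak g^{*}\cong\nabla(2)$.

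The second step records the building-block cohomology. Unlike the case $p\geq 3$ treated in \cite{AJ84}, the ring $\opH^{\bullet}(G_{1},k)$ does not vanish in odd degrees; I would compute it from the idempotent decomposition $\operatorname{Dist}(G_{1})=u(\mathfrak g)\cong u_{0}\times u_{1}$ afforded by the central idempotent $h$, where $u_{0}\cong k[e,f]/(e^{2},f^{2})$ is the block carrying the trivial module. This yields $\opH^{\bullet}(G_{1},k)^{(-1)}\cong S^{\bullet}(L(1))$, that is $\opH^{n}(G_{1},k)^{(-1)}\cong\nabla(n)$; this is what the statement denotes $k[\mathcal N]_{n}$, the cohomology ring being generated in cohomological degree one so that no degree-doubling occurs. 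Since $L(2)$ is $G_{1}$-trivial, the projection formula gives $\opH^{\bullet}(G_{1},L(2))^{(-1)}\cong S^{\bullet}(L(1))\otimes L(1)$.

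The third step computes $\opH^{\bullet}(G_{1},\Delta(2))$ and $\opH^{\bullet}(G_{1},\nabla(2))$ from the long exact sequences attached to $0\to k\to\Delta(2)\to L(2)\to 0$ and $0\to L(2)\to\nabla(2)\to k\to 0$. Under the identifications above the connecting maps are $G$-module maps $S^{n}(L(1))\otimes L(1)\to S^{n+1}(L(1))$ and $S^{n}(L(1))\to S^{n+1}(L(1))\otimes L(1)$, and the Clebsch--Gordan sequence
$$0\to S^{n-1}(L(1))\to S^{n}(L(1))\otimes L(1)\to S^{n+1}(L(1))\to 0$$
identifies their kernels and cokernels, giving $\opH^{n}(G_{1},\Delta(2))^{(-1)}\cong\nabla(n-1)$ and $\opH^{n}(G_{1},\nabla(2))^{(-1)}\cong\nabla(n+1)$ for $n\geq1$. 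Assembling the four summands produces, for $n\geq 1$,
$$\HH^{n}(G_{1})^{(-1)}\cong\nabla(n)^{\oplus 2}\oplus\nabla(n-1)\oplus\nabla(n+1).$$
Frobenius reciprocity then identifies $\nabla(n+1)\cong\operatorname{ind}_{B}^{G}[S^{n}(\mathfrak u^{*})^{(-1)}\otimes\omega]$ and $\nabla(n-1)\cong\operatorname{ind}_{B}^{G}[S^{n}(\mathfrak u^{*})^{(-1)}\otimes-\omega]$, yielding (b). Part (a) is the degree-zero term, handled separately by the direct calculation $\overline{S}(\mathfrak g)^{G_{1}}=\langle 1,h,eh,hf,ehf\rangle$, whose weights give $k^{\oplus 3}\oplus L(1)$ after untwisting (the formula above does not extend to $n=0$, since there $\opH^{0}(G_{1},\Delta(2))^{(-1)}\cong\nabla(0)$ rather than $\nabla(-1)=0$).

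The main obstacle is the third step: pinning down the two connecting homomorphisms. They are cup products with the extension classes of $\Delta(2)$ and $\nabla(2)$ inside $\opH^{1}(G_{1},L(2))^{(-1)}\cong L(1)\otimes L(1)$, so the crux is to show these classes are the nonzero $G$-equivariant ones, which forces the connecting maps to be the non-degenerate Clebsch--Gordan maps rather than zero. The degree shift from $\nabla(n-1)$ to $\nabla(n+1)$ between the two answers shows that one cannot obtain the $\nabla(2)$ computation by naively dualizing the $\Delta(2)$ computation, so both long exact sequences must be analyzed directly.
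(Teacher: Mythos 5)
Your proposal is correct, and it reaches the theorem by a genuinely different route from the paper's. The paper never looks at the block structure of $u(\mathfrak{sl}_2)$: it computes the $B_1$-cohomology $\opH^{\bullet}(B_{1},-)^{(-1)}$ of each summand $k$, $\Delta(2)$, $\nabla(2)$ as a $B$-module (for the trivial summands via \cite[2.4(3)]{AJ84}, and for $\Delta(2)$, $\nabla(2)$ by splitting them over $B_1T$ into a one-dimensional module plus a projective indecomposable $B_1T$-module), and then passes to $G_1$ by showing $\opH^{\bullet}(G_{1},\bar S({\mathfrak g}))\cong\operatorname{ind}_{B}^{G}\opH^{\bullet}(B_{1},\bar S({\mathfrak g}))$, the higher derived functors $R^{j}\operatorname{ind}_{B}^{G}$ vanishing because all weights occurring are $\geq -1$ \cite[II.4.5, II.5.4]{rags}. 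You instead stay at the level of $G_1$ throughout: the central idempotent $h$ splits $u(\mathfrak{sl}_2)\cong k[e,f]/(e^2,f^2)\times M_2(k)$, which gives $\opH^{\bullet}(G_{1},k)$ as a polynomial algebra on $\opH^{1}(G_{1},k)^{(-1)}\cong L(1)$, and the two long exact sequences with Clebsch--Gordan connecting maps produce the $\Delta(2)$ and $\nabla(2)$ columns. Your degreewise answers agree with the paper's everywhere, including degree zero, and your reading of $k[{\mathcal N}]_{\bullet}$ as $\nabla(\bullet)\cong\operatorname{ind}_{B}^{G}S^{\bullet}({\mathfrak u}^{*})^{(-1)}$ (generators in cohomological degree one) is the one forced by the paper's own proof and by dimension counts. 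What your route buys: it is self-contained and elementary (no derived induction, no Kempf-type vanishing), and it explains structurally why odd-degree classes appear at $p=2$. What the paper's route buys: it runs parallel to the $p\geq 3$ strategy used elsewhere, and it produces along the way the $B$-module description of $\opH^{\bullet}(B_{1},\bar S({\mathfrak g}))^{(-1)}$ that the proof of Theorem~\ref{T:B1-coho}(b) explicitly reuses; your argument would not supply that later input.

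One point you flag but leave open deserves care, though it closes in two lines. For the long exact sequences you need the extension classes of $\Delta(2)$ and $\nabla(2)$ to be nonzero in $\operatorname{Ext}^{1}_{G_{1}}(L(2),k)$, and non-splitness as $G$-modules (which is what your identification $\mathfrak g\cong\Delta(2)$ provides) does not formally imply non-splitness after restriction to $G_{1}$. Here it does hold: $k$ and $L(2)$ are trivial $G_{1}$-modules, so a $G_{1}$-splitting of $0\to k\to\Delta(2)\to L(2)\to 0$ would force $G_{1}$ to act trivially on $\mathfrak g$, contradicting $[e,f]=h\neq 0$. Equivariance of the classes is automatic, since the long exact sequence of $G_{1}$-cohomology attached to an exact sequence of $G$-modules consists of $G$-module maps; then Schur's lemma applied to $\operatorname{Hom}_{G}(L(2),\opH^{1}(G_{1},k))\cong k$ pins each class, hence each connecting map, down to a nonzero scalar multiple of the nondegenerate Clebsch--Gordan map, which is exactly what your kernel and cokernel computations require.
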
 

\begin{proof} The structure of $\bar{S}({\mathfrak g})$ for $p=2$ is given in the first table in the Appendix. The possible $G$-factors are $k$, $\nabla(2)$ and $\Delta(2)$ which can be obtained by using 
Proposition~\ref{P:symmetricpowers} and  (\ref{E:duality}). 

We now look at the $B_{1}$-cohomology groups, $\operatorname{H}^{\bullet}(B_{1}, \bar{S}({\mathfrak g}))^{(-1)}$ as a $B$-module. First, one has as a $B$-module, 
\begin{equation} 
\operatorname{H}^{\bullet}(B_{1}, \bar{S}^{n}({\mathfrak g}))^{(-1)}\cong S^{\bullet}({\mathfrak u}^{*})^{(-1)}
\end{equation} 
for $n=0,3$ by \cite[2.4(3)]{AJ84}. 
Next, observe that $\nabla(2)$ as a $B_{1}T$-module is the direct sum of the one-dimensional module of weight $2$ and the projective indecomposable $B_{1}T$-module for the weight $-2$. 
Similarly, $\Delta(2)$ as a $B_{1}T$-module is the direct sum of the one-dimensional module of weight $-2$ and the projective indecomposable $B_{1}T$-module for the weight $0$. 
From this information one can deduce that for $\bullet>0$, one has the following isomorphisms as $T$-modules: 
\begin{equation} 
\operatorname{H}^{\bullet}(B_{1}, \nabla(2))^{(-1)}\cong S^{\bullet}({\mathfrak u}^{*})^{(-1)}\otimes \omega
\end{equation} 
\begin{equation} 
\operatorname{H}^{\bullet}(B_{1}, \Delta(2))^{(-1)}\cong S^{\bullet}({\mathfrak u}^{*})^{(-1)}\otimes -\omega.
\end{equation} 

Finally, we need to handle the case for $\operatorname{H}^{0}(B_{1},\nabla(2))$ and $\operatorname{H}^{0}(B_{1},\Delta(2))$. As a $B_{1}T$-modules, one has 
\begin{equation} 
\operatorname{H}^{0}(B_{1},\nabla(2))^{(-1)}\cong \omega \oplus -\omega. 
\end{equation} 
\begin{equation}
\operatorname{H}^{0}(B_{1},\Delta(2))^{(-1)}\cong -\omega \oplus k. 
\end{equation} 
In order to obtain the $B$-module structure, one can use the isomorphism: 
\begin{equation} 
\text{Hom}_{B}(\mu,\operatorname{H}^{0}(B_{1},N)^{(-1)})\cong \text{Hom}_{B}(2\mu,N)\cong \text{Hom}_{G}(\Delta(-2\mu),N). 
\end{equation} 
Plugging in $N=\nabla(2)$ and $\Delta(2)$, one obtains the following isomorphisms as $B$-modules: 
\begin{equation} 
\operatorname{H}^{0}(B_{1},\nabla(2))^{(-1)}\cong L(1).  
\end{equation} 
\begin{equation} 
\operatorname{H}^{0}(B_{1},\Delta(2))^{(-1)}\cong -\omega \oplus k. 
\end{equation} 
Since the weights of $\operatorname{H}^{\bullet}(B_{1}, \bar{S}^{n}({\mathfrak g}))^{(-1)}\cong S^{\bullet}({\mathfrak u}^{*})^{(-1)}$ are greater than or equal to $-1$, one has by using \cite[II 4.5 Proposition, 5.4 Proposition]{rags}
\begin{equation} 
\opH^{\bullet}(G_{1},\bar{S}({\mathfrak g}))\cong \text{ind}_{B}^{G} \opH^{\bullet}(B_{1}, \bar{S}({\mathfrak g})).
\end{equation} 
The statement of the theorem now follows by using this isomorphism with the calculations of the $B_{1}$-cohomology. 
\end{proof} 

\subsection{The $G$-algebra structure of $\HH^{\bullet}(G_{1})$} We first describe the ring structure of the ${\mathfrak u}$-cohomology for 
$\bar{S}({\mathfrak g})_{0}$ under the $T_{1}$-invariants. 

\begin{prop} Let $p\geq 3$ and $G=SL_{2}$. Then 
\begin{itemize} 
\item[(a)] $\operatorname{H}^{\bullet}({\mathfrak u},\bar{S}^{\bullet}({\mathfrak g})_{0})^{T_{1}}$ has a $k$-basis with elements 
\begin{itemize} 
\item[(i)] $\{1, x, x^{2},\dots,x^{s}\}$ where $x$ has bidegree (0,2), and $s=\frac{3(p-1)}{2}$, 
\item[(ii)] $\{y, yx, yx^{2},\dots,yx^{t}\}$ where $y$ has bidegree $(1,p-1)$, and $t=\frac{p-1}{2}$, 
\item[(iii)] $\{z, zx,\dots, zx^{u}\}$ where $z$ has bidegree $(1,p)$, and $u=\frac{p-3}{2}$.  
\item[(iv)] $\{z^{\prime}, z^{\prime}x,\dots, z^{\prime}x^{u}\}$ where $z^{\prime}$ has bidegree $(1,p)$, and $u=\frac{p-3}{2}$.  
\end{itemize}
\item[(b)] The element $x$ is represented by $1\otimes [ef-h]$, $y$ is represented by $e^{*}\otimes f^{p-1}$, and $z$ is represented by 
$e^{*}\otimes f^{p-1}h$. 
\item[(c)] The multiplication is given by the following. 
\begin{itemize} 
\item[(i)] The elements in (i)-(iv)  in (a) commute. 
\item[(ii)] The product of the elements in (ii)-(iv)  in (a) are zero. 
\end{itemize} 
\item[(d)] The weights of the elements in (i), (ii), and (iv) are $0$, and in (iii) are $2p$. 
\end{itemize} 
\end{prop}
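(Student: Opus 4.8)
The plan is to reduce everything to the $G$-module decomposition of $\overline{S}^{\bullet}({\mathfrak g})_{0}$ already recorded in \cref{P:S-factors}, compute the ${\mathfrak u}$-cohomology of each indecomposable summand together with its $T_{1}$-invariants, and then read off the bigraded basis, the weights, and the ring structure. Since $\dim{\mathfrak u}=1$, for any module $M$ the Lie algebra cohomology is computed by the two-term complex $M\xrightarrow{e}M$, so that $\operatorname{H}^{0}({\mathfrak u},M)=\ker e$, $\operatorname{H}^{1}({\mathfrak u},M)={\mathfrak u}^{*}\otimes\operatorname{coker} e$, and $\operatorname{H}^{j}({\mathfrak u},M)=0$ for $j\ge 2$. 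This last vanishing is the structural engine behind part (c): any product of two classes of cohomological degree $1$ lands in $\operatorname{H}^{2}({\mathfrak u},-)=0$, which immediately forces the products in (c)(ii) to vanish, while graded-commutativity of the cohomology ring (everything being concentrated in cohomological degrees $0$ and $1$) gives (c)(i).

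For the additive statement (a), I would treat the three possible summands $k$, $T(2p-2)$, $L(2p-2)$ of \cref{P:S-factors} in turn. On a trivial summand $k$ only $\operatorname{H}^{0}=k$ (weight $0$) survives the $T_{1}$-invariants, since $\operatorname{H}^{1}={\mathfrak u}^{*}$ has weight $2\not\equiv 0\pmod p$; these account for the powers of $x$ in symmetric degrees below $p-1$ and above $2(p-1)$. On $L(2p-2)\cong L(p-2)\otimes L(1)^{(1)}$ I would use that the Frobenius twist is ${\mathfrak u}$-trivial with weights $\pm p$, combine this with Kostant's description of $\operatorname{H}^{\bullet}({\mathfrak u},L(p-2))$, and pass to invariants: $\operatorname{H}^{0}$ dies, while $\operatorname{H}^{1}$ contributes a two-dimensional space of weights $0$ and $2p$, which are the classes $z'$ and $z$. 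On $T(2p-2)$, which restricts to the projective $G_{1}$-module $Q_{1}(0)$ and is therefore free over $u({\mathfrak u})$ of rank $2$, I would split its weights into the two length-$p$ Jordan strings under $e$; then $\ker e$ and $\operatorname{coker} e$ each have dimension $2$, with exactly one weight in each divisible by $p$, producing a single weight-$0$ class in $\operatorname{H}^{0}$ and a single class in $\operatorname{H}^{1}$ (the source of the family $y\,x^{k}$). Matching each surviving class to the symmetric degree of the summand that produced it yields the four families (i)--(iv) with the stated bidegrees; exhibiting explicit cocycles --- the Casimir quadric for $x$, and $e^{*}\otimes f^{p-1}$, $e^{*}\otimes f^{p-1}h$ for the degree-one classes --- gives (b), and reading off the $T$-weight of each representative gives (d).

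The ring structure (c) then comes down to multiplication by $x$. The crucial observation is that $x$ is the class of the $G$-invariant Casimir element of $\overline{S}^{2}({\mathfrak g})$, so multiplication by $x$ is a $G$-module (in particular weight- and block-preserving) map raising symmetric degree by $2$; this makes $x$ central and shows $\operatorname{H}^{0}$ is the cyclic module $k[x]/(x^{s+1})$, truncating precisely because $x^{s+1}$ would lie in symmetric degree $3p-1$, beyond the top degree $N=3(p-1)$. For the families generated by $y$, $z$, $z'$ I would show that $x^{k}$ times the generator is nonzero exactly while the target symmetric degree still carries a $T(2p-2)$ (resp. $L(2p-2)$) summand with nonvanishing $T_{1}$-invariant $\operatorname{H}^{1}$; once the degree exceeds $2(p-1)$ the summand reverts to a trivial $k$ whose $\operatorname{H}^{1}$ is annihilated by the invariants, which forces the truncations at $t$ and $u$.

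I expect the main obstacle to be the two module-theoretic computations feeding into the second step, together with the non-vanishing needed in the third: determining the precise $e$-Jordan structure, and hence the weights of $\ker e$ and $\operatorname{coker} e$, of the tilting module $T(2p-2)=Q_{1}(0)$, and then verifying through the explicit representatives that multiplication by the Casimir is actually injective on the relevant one-dimensional $\operatorname{H}^{1}$-strands rather than merely landing in the correct degree. The weight bookkeeping under $T_{1}$-invariance is the delicate thread that must be followed consistently, since it is exactly what selects one of the two Jordan strings in $T(2p-2)$ and both strands in $L(2p-2)$.
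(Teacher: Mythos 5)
Your overall strategy coincides with the paper's: both proofs reduce via Proposition~\ref{P:S-factors} to the three summand types $k$, $T(2p-2)$, $L(2p-2)$, compute $\operatorname{H}^{\bullet}({\mathfrak u},-)^{T_{1}}$ on each (the trivial summand directly; $L(2p-2)$ via the Steinberg tensor product and Kostant; $T(2p-2)$ via its projectivity --- a point the paper leaves implicit and you make explicit through freeness over $u({\mathfrak u})$ and Jordan strings), and deduce part (c) from vanishing above cohomological degree one (your $\operatorname{H}^{\geq 2}({\mathfrak u},-)=0$ is the same fact as the paper's $\Lambda^{\geq 2}({\mathfrak u}^{*})=0$ inside the ambient ring). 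The per-summand dimension and weight counts, and the resulting identification of the four families, are correct and agree with the paper's proof.

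The genuine gap sits exactly where you flagged ``the main obstacle'': multiplication by $x$. Any $T_{1}$-invariant class of bidegree $(0,2)$ has weight $0$ (the weights of $\overline{S}^{2}({\mathfrak g})$ are even and lie in $[-4,4]$), hence is represented by $a\,ef+b\,h^{2}$ for some $a,b\in k$; the paper's representative $ef-h^{2}$ is not ad-invariant, and the honest invariant is $h^{2}+4ef$, but this does not matter for the following point. In the truncated algebra the freshman's dream gives $(a\,ef+b\,h^{2})^{p}=a^{p}e^{p}f^{p}+b^{p}h^{2p}=0$, and since $p\leq s=\tfrac{3(p-1)}{2}$ for every $p\geq 3$, the putative basis vectors $x^{p},\dots,x^{s}$ all vanish: multiplication by $x$ on the $\operatorname{H}^{0}$-strand is injective only through $x^{p-1}$. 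For instance the class in top bidegree $(0,3(p-1))$ is spanned by $e^{p-1}h^{p-1}f^{p-1}$, which is not a power of $x$. So your claim that $\operatorname{H}^{0}$ is the cyclic module $k[x]/(x^{s+1})$, ``truncating precisely because $x^{s+1}$ would exceed the top degree,'' is false, and the verification you correctly identified as necessary in fact fails. (Your treatment of the $\operatorname{H}^{1}$-families is fine: $yx^{k}=e^{*}\otimes f^{p-1}h^{2k}$ and $zx^{k}=e^{*}\otimes f^{p-1}h^{2k+1}$, which are nonzero in $\operatorname{coker}(e)$ and truncate for exactly the degree reasons you give.) To be fair, this gap is inherited from the paper itself, whose proof simply asserts that ``the powers of $x$ are a basis'' for the $\operatorname{H}^{0}$-terms with no justification; what is actually true is only that there is one basis class in each bidegree $(0,2i)$, $0\leq i\leq s$, and these cannot all be powers of a single degree-$(0,2)$ element. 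A similarly careful pass through your Jordan-string computation for $T(2p-2)$ also shows that the surviving $\operatorname{H}^{1}$-class there has weight $2p$ (as its representative $e^{*}\otimes f^{p-1}$ confirms), not weight $0$ as asserted in part (d) --- another internal inconsistency of the statement that your method, pushed to completion, would surface.
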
 

\begin{proof}  In order to verify (b), (c), and (d), we observe that $\opH^{\bullet}({\mathfrak u},\bar{S}^{\bullet}({\mathfrak g})_{0})^{T_{1}}$ is a subquotient of 
$$\opH^{\bullet}({\mathfrak u},k)\otimes \bar{S}^{\bullet}({\mathfrak g})_{0})^{T_{1}}\cong 
[\Lambda^{\bullet}({\mathfrak u}^{*}) \otimes \bar{S}^{\bullet}({\mathfrak g})_{0}]^{T_{1}}.$$ 
The bigraded ring structure of $\Lambda^{\bullet}({\mathfrak u}^{*})\otimes S^{\bullet}({\mathfrak g})_{0}$ arises directly from the ring structures on the graded rings, 
$\Lambda^{\bullet}({\mathfrak u}^{*})$ and $S^{\bullet}({\mathfrak g})$. One has $\dim \Lambda^{n}({\mathfrak u}^{*})=0$ for $n\geq 2$, and 
$\dim \Lambda^{n}({\mathfrak u}^{*})=1$ for $n=0,1$. The $T$-module $\Lambda^{0}({\mathfrak u}^{*})$ is spanned by $\{1\}$, and 
$\Lambda^{1}({\mathfrak u}^{*})$ is spanned by $\{e^{*}\}$.  

Next we describe a basis for  $\operatorname{H}^{\bullet}({\mathfrak u},\bar{S}^{\bullet}({\mathfrak g})_{0})^{T_{1}}$ to justify part (a).  According to Proposition~\ref{P:S-factors}, 
the $G$-summands of $\bar{S}^{\bullet}({\mathfrak g})_{0}$ are $k$, $T(2p-2)$, and $L(2p-2)$. For (i), the element $x$ is represented by $1\otimes [ef-h^{2}]$, and the powers of $x$ are a basis for 
terms involving $\opH^{0}({\mathfrak u},k)^{T_{1}}$ and $\opH^{0}({\mathfrak u},T(2p-2))^{T_{1}}$. Note that $\opH^{1}({\mathfrak u},k)^{T_{1}}=0$. 

For (ii), $y$ is represented by $e^{*}\otimes f^{p-1}$. The elements in (ii) form a basis for the terms involving $\opH^{1}({\mathfrak u},T(2p-2))^{T_{1}}$. Finally, note that 
$$\opH^{0}({\mathfrak u},L(2p-2))^{T_{1}}\cong \opH^{0}({\mathfrak u},L(p-2))^{T_{1}}\otimes L(1)^{(1)}=0.$$ 
A basis for  
$\opH^{1}({\mathfrak u},L(2p-2))^{T_{1}}\cong \opH^{1}({\mathfrak u},L(p-2))^{T_{1}}\otimes L(1)^{(1)}$ is given by the elements listed in (iii) and (iv). 
\end{proof} 

We remark that the element $z^{\prime}$ for $p=3$ can be represented by $e^{*}\otimes [eh^{2}-e^{2}f]$. For $p\geq 5$ this expression becomes more complicated. 
Next, we need to understand the ring structure of $\opH^{\bullet}(B_{1},\bar{S}({\mathfrak g})_{0})$. 

\begin{theorem}\label{T:B1-coho} Let $G=SL_{2}$. As a $B$-algebra, one has the following isomorphisms. 
\begin{itemize} 
\item[(a)] For $p\geq 3$, 
$$\opH^{\bullet}(B_{1},\bar{S}({\mathfrak g})_{0})\cong [S^{\bullet}({\mathfrak u}^{*})^{(1)}\otimes  \operatorname{H}^{\bullet}({\mathfrak u},\bar{S}({\mathfrak g})_{0})^{T_{1}}]/I_{p}$$
where elements in ${\mathfrak u}^{*}$ have degree $2$ and 
$$I_{p}=\langle \bigoplus_{i=0}^{(p-1)/2}S^{\bullet}({\mathfrak u}^{*})^{(1)}\otimes f^{p-1}x^{i},\  \bigoplus_{j=(p-1)/2}^{p-1}S^{\bullet}({\mathfrak u}^{*})^{(1)}_{+}\otimes x^{j} \rangle. $$
\item[(b)]  For $p=2$, 
$$\opH^{\bullet}(B_{1},\bar{S}({\mathfrak g})_{0})\cong [S^{\bullet}({\mathfrak u}^{*})\otimes \bar{S}({\mathfrak g})]/I_{2}$$ 
where the elements in ${\mathfrak u}^{*}$ have degree $1$, and 
$$
I_{2}=\langle S^{\bullet}({\mathfrak u}^{*})^{(1)}\otimes ef,\  S^{\bullet}({\mathfrak u}^{*})^{(1)}\otimes f,\  S^{\bullet}({\mathfrak u}^{*})^{(1)}_{+}\otimes h,\  
S^{\bullet}({\mathfrak u}^{*})^{(1)}_{+}\otimes fh \rangle. 
$$ 
\end{itemize} 
\end{theorem}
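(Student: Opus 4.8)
The plan is to feed the principal-block summand $\overline S(\mathfrak g)_0$ of $\overline S(\mathfrak g)\cong\DGone$ into the two-row spectral sequence of Theorem~\ref{T:spectral3}(b). Since $G_1$-cohomology respects the block decomposition, this produces a first-quadrant multiplicative spectral sequence
\[
E_2^{2i,j}=S^{i}(\mathfrak u^{*})^{(1)}\otimes \operatorname{H}^{j}(\mathfrak u,\overline S(\mathfrak g)_0)^{T_1}\ \Longrightarrow\ \operatorname{H}^{i+j}(B_1,\overline S(\mathfrak g)_0).
\]
By the preceding proposition $\operatorname{H}^{\bullet}(\mathfrak u,\overline S(\mathfrak g)_0)^{T_1}$ is concentrated in Lie-cohomological degrees $j=0,1$ (as $\dim\mathfrak u=1$), so the $E_2$-page occupies only the two rows $j=0,1$. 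A two-row first-quadrant spectral sequence can support only the differential $d_2\colon E_2^{2i,1}\to E_2^{2i+2,0}$, and it automatically collapses at $E_3=E_\infty$. The whole computation therefore reduces to determining $d_2$ and then resolving the ensuing extension problem.

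First I would pin down $d_2$ by equivariance. It is a map of $T/T_1$-modules and hence weight preserving, whereas moving from column $2i$ to column $2i+2$ raises the weight by $2p$, the weight of the polynomial generator $\sigma\in S^{1}(\mathfrak u^{*})^{(1)}$. Comparing with part (d) of the preceding proposition, where the generators $x,y,z'$ all have weight $0$ and only $z$ has weight $2p$, this forces $d_2$ to vanish on the $x$-, $y$- and $z'$-families and to be nonzero only on the weight-$2p$ family generated by $z$. Since $d_2$ is a derivation over the permanent cycles $S^{\bullet}(\mathfrak u^{*})^{(1)}$, it is determined by $d_2(z)$, which for weight and bidegree reasons must equal $c\,\sigma\, x^{(p-1)/2}$ for some scalar $c$. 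The crux is to show $c\neq 0$, i.e.\ that the sequence genuinely fails to collapse at $E_2$, in contrast with the $k$-coefficient situation of Andersen--Jantzen; I would verify this by evaluating the relevant differential on the explicit cocycle representing $z$ against $\sigma$, equivalently by exhibiting one class in $\overline S(\mathfrak g)_0$ whose $B_1$-cohomology is strictly smaller than the naive tensor product predicts.

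Granting $d_2(z)=c\,\sigma x^{(p-1)/2}$ with $c\neq0$, the $E_\infty$-page is the homology of $(E_2,d_2)$. The weight-$2p$ family injects into row $0$ and dies, contributing together with its $S^{\bullet}(\mathfrak u^{*})^{(1)}$-multiples the first family of generators of $I_p$, while its image produces exactly the boundary relations $S^{\bullet}(\mathfrak u^{*})^{(1)}_{+}\otimes x^{j}=0$ for $j\ge(p-1)/2$, the second family of generators of $I_p$. The surviving $x$-, $y$- and $z'$-classes account for the remaining generators, yielding the isomorphism of graded $B$-algebras $\operatorname{gr}\operatorname{H}^{\bullet}(B_1,\overline S(\mathfrak g)_0)\cong[S^{\bullet}(\mathfrak u^{*})^{(1)}\otimes\operatorname{H}^{\bullet}(\mathfrak u,\overline S(\mathfrak g)_0)^{T_1}]/I_p$. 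To lift this associated-graded statement to the claimed ring isomorphism I would ungrade following \cite{DNN12}, exactly as in the Taft computation: choose honest cohomology classes representing $x,y,z'$ and $\sigma$, and use weight considerations to show that their products carry no lower-filtration correction terms, so that the multiplicative filtration splits. The genuine nonvanishing of $d_2(z)$, together with the bookkeeping showing that these two families cut out precisely $I_p$, is the main obstacle.

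For part (b), $p=2$, Theorem~\ref{T:spectral3} no longer applies, so I would argue directly as in the $p=2$ Taft case. Here $\operatorname{H}^{\bullet}(U_1,k)\cong S^{\bullet}(\mathfrak u^{*})^{(1)}$ with the generator in degree $1$, and the adjoint $U_1$-action on $\overline S(\mathfrak g)$ (now nontrivial, since $e\cdot f=h$) is handled on the small module $\overline S(\mathfrak g)_0$ using the explicit structure recorded in the first table of the appendix. Taking $T_1$-invariants and tracking the surviving relations among the monomials $ef,f,h,fh$ produces the ideal $I_2$ and the stated presentation.
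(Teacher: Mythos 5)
Your overall architecture (the two-row spectral sequence of Theorem~\ref{T:spectral3}(b), collapse at $E_3$, ungrading via \cite{DNN12}) matches the paper's, but the way you pin down $d_2$ is wrong, and the one step you defer is precisely the paper's key idea. The differential is constrained by more than $T$-equivariance: since $\bar{S}({\mathfrak g})_{0}=\bigoplus_{n}\bar{S}^{n}({\mathfrak g})_{0}$ is a direct sum of $B_{1}$-submodules, the spectral sequence splits into a direct sum indexed by the internal degree $n$, equivalently by the $G$-summands $k$, $T(2p-2)$, $L(2p-2)$ of Proposition~\ref{P:S-factors}, and $d_{2}$ must preserve each piece. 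The classes $z,z'$ live in the summands $L(2p-2)$ (odd $n$), for which the zeroth row vanishes, $\opH^{0}({\mathfrak u},L(2p-2))^{T_{1}}=0$; so $d_{2}$ vanishes on the $z$- and $z'$-families for trivial reasons --- there is nothing in their internal degree to hit. (Parity alone already kills your proposed formula: $z$ has internal degree $p$ while $\sigma x^{(p-1)/2}$ has internal degree $p-1$.) The family supporting the nonzero differential is the $y$-family, which sits in the summands $T(2p-2)$ (even $n$) alongside the row-zero classes $x^{n/2}$, via $d_{2}(\sigma^{i}yx^{a})=c\,\sigma^{i+1}x^{(p-1)/2+a}$. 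This is what the theorem records: the first family of generators of $I_{p}$ is $S^{\bullet}({\mathfrak u}^{*})^{(1)}\otimes f^{p-1}x^{i}$, $i=0,\dots,(p-1)/2$, i.e.\ the $y$-classes $e^{*}\otimes f^{p-1}x^{i}$ (note the count, $(p+1)/2$ elements, matches the $y$-family, not the $z$-family, which has only $(p-1)/2$ elements). Completed as written, your argument would delete the $z$-family and keep the $y$-family, proving a statement incompatible with the theorem. In fairness, part (d) of the preceding proposition misled you: the representative $e^{*}\otimes f^{p-1}$ of $y$ actually has weight $2p$, not $0$ (compute $\opH^{1}({\mathfrak u},T(2p-2))^{T_{1}}$ directly from the fact that $T(2p-2)$ is free of rank $2$ over $u({\mathfrak u})$), so the weight obstruction you invoke against $d_{2}(y)\neq 0$ is not there.

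Second, the nonvanishing of $d_{2}$ --- which you yourself flag as ``the crux'' and ``the main obstacle'' --- is never established; you only indicate how one might verify it. The paper's proof supplies exactly the missing idea, and it is your second suggestion carried out: $T(2p-2)|_{G_{1}}\cong Q_{1}(0)$ is projective, hence projective on restriction to $B_{1}$, so $\opH^{j}(B_{1},T(2p-2))=0$ for $j>0$ and $\opH^{0}(B_{1},T(2p-2))\cong k$. For such a summand the $E_{2}$-page has two full nonzero rows but a one-dimensional abutment, forcing $d_{2}\colon E_{2}^{2i,1}\to E_{2}^{2i+2,0}$ to be an isomorphism for every $i\geq 0$. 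This single observation proves the nonvanishing, identifies the dying classes (the $y$-family as sources, $S^{\bullet}({\mathfrak u}^{*})^{(1)}_{+}\otimes x^{j}$ for $j\geq (p-1)/2$ as targets), and yields $I_{p}$ with no cocycle evaluation; the summands $k$ and $L(2p-2)$ collapse at $E_{2}$ by comparison with the known answers of Andersen--Jantzen. For $p=2$ your sketch is in the same spirit as the paper's (reduce to a subquotient of $S^{\bullet}({\mathfrak u}^{*})\otimes\bar{S}({\mathfrak g})$, then match against the $B$-module computation in Theorem~\ref{T:G-module}), but as written it is too thin to certify the generators of $I_{2}$.
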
 
\begin{proof}  Let $p\geq 3$. For any $B_{1}$-module $N$, one has a spectral sequence (cf. Theorem~\ref{T:spectral3}): 
\begin{equation}\label{eq:ss-B1coho} 
E_{2}^{2i,j}=S^{i}({\mathfrak u}^{*})^{(1)}\otimes \Hh^{j}({\mathfrak u},N)^{T_{1}}\Rightarrow \opH^{i+j}(B_{1},N).
\end{equation}  
We will apply this spectral sequence to the $G$-summands of $\bar{S}({\mathfrak g})_{0}$ which are 
$k$, $L(2p-2)$, and $T(2p-2)$. 

For $N=k$, $\Hh^{\bullet}({\mathfrak u},k)^{T_{1}}\cong k$. The spectral sequence collapses (\ref{eq:ss-B1coho}) at $E_{2}$, and 
\begin{equation} 
\opH^{\bullet}(B_{1},k)^{(-1)}\cong S^{\bullet/2}({\mathfrak u}^{*}). 
\end{equation} 
For $N=L(2p-2)$, observe that $L(2p-2)=L(p-2)\otimes L(1)^{(1)}\cong \nabla(p-2)\otimes L(1)^{(1)}$, thus by \cite[Corollary 3.7]{AJ84}
\begin{equation} 
\opH^{\bullet}(B_{1},L(2p-2))^{(-1)}\cong [S^{(\bullet-1)/2}({\mathfrak u}^{*})\otimes \omega]\otimes L(1) . 
\end{equation} 
In this case the spectral sequence (\ref{eq:ss-B1coho}) when $N=L(2p-2)$ collapses at $E_{2}$. 
For $N=T(2p-2)$, one has $T(2p-2)|_{G_{1}}\cong Q_{1}(0)$ which is projective as a $G_{1}$-module, thus also as a $B_{1}$-module. 
Observe that $\opH^{j}(B_{1},N)=0$ for $j>0$ and $\opH^{0}(B_{1},N)\cong k$. Therefore, the spectral sequence (\ref{eq:ss-B1coho}) 
cannot collapse at $E_{2}$. Indeed, by considering the differentials, the spectral sequence must stop at $E_{3}$, for these terms. 

Now one can put this all together for $N=\bar{S}^{\bullet}({\mathfrak g})_{0}$. The spectral sequence will stop at $E_{3}$, and yield an isomorphism: 
$$\text{gr }\opH^{\bullet}(B_{1},\bar{S}({\mathfrak g})_{0})\cong [S^{\bullet}({\mathfrak u}^{*})^{(1)}\otimes  \operatorname{H}^{\bullet}({\mathfrak u},\bar{S}({\mathfrak g})_{0})^{T_{1}}]/I_{p}.$$
The terms in $I_{p}$ arise as the elements in the first and second rows in the spectral sequence which are eliminated by the differentials $\delta_{2}$ for the summands $T(2p-2)$
in $N$. 

We need to degrade the spectral sequence. First note as a ring 
\begin{equation}
\opH^{\bullet}(U_{1},k)\cong  S^{\bullet} ({\mathfrak u}^{*})^{(1)}\otimes \opH^{\bullet}({\mathfrak u},k).
\end{equation} 
 For $G=SL_{2}$ and 
$p\geq 3$, this is not hard to verify. However, for general reductive groups $G$, this is known to hold when $p\geq 2h-1$ and requires an argument to degrade a spectral sequence. 
Using this fact, one has as a $B/B_{1}$-algebra: 
\begin{eqnarray*} 
\opH^{\bullet}(B_{1},\bar{S}({\mathfrak g})_{0})&\cong &[\opH^{\bullet}(U_{1},k)\otimes \bar{S}({\mathfrak g})_{0})]^{T_{1}}\\
&\cong& S^{\bullet}({\mathfrak u}^{*})^{(1)}\otimes [\opH^{\bullet}({\mathfrak u},k)\otimes \bar{S}({\mathfrak g})_{0}]^{T_{1}}
\end{eqnarray*} 
Now the ring structure of $\operatorname{H}^{\bullet}({\mathfrak u},\bar{S}({\mathfrak g})_{0})^{T_{1}}$ arises from that of 
 $[\opH^{\bullet}({\mathfrak u},k)\otimes \bar{S}({\mathfrak g})_{0}]^{T_{1}}$. Therefore, one can ungrade the spectral sequence and we can deduce that as a $B/B_{1}$-algebra: 
 $$\opH^{\bullet}(B_{1},\bar{S}({\mathfrak g})_{0})\cong [S^{\bullet}({\mathfrak u}^{*})^{(1)}\otimes  \operatorname{H}^{\bullet}({\mathfrak u},\bar{S}({\mathfrak g})_{0})^{T_{1}}]/I_{p}.$$
 
For the $p=2$ case, observe that $\bar{S}^{\bullet}({\mathfrak g})\cong \bar{S}^{\bullet}({\mathfrak g})_{0}$ and 
$$\opH^{\bullet}(B_{1},\bar{S}({\mathfrak g}))\cong \opH^{\bullet}(U_{1},\bar{S}({\mathfrak g}))^{T_{1}}\cong 
\opH^{\bullet}(U_{1},,k)^{T_{1}}\otimes \bar{S}({\mathfrak g})\cong  \opH^{\bullet}(B_{1},,k)\otimes \bar{S}({\mathfrak g}) .$$ 
Note that all modules above are $T_{1}$-invariant. 
This shows that the ring $\opH^{\bullet}(B_{1},\bar{S}({\mathfrak g}))$ is a subquotient of the ring $S^{\bullet}({\mathfrak u}^{*})^{(1)}\otimes \bar{S}({\mathfrak g})$. 
Now one can use the $B$-module structure of $\opH^{\bullet}(B_{1},\bar{S}({\mathfrak g}))^{(-1)}$ given in the proof of Theorem~\ref{T:G-module}, to see that 
$$\opH^{\bullet}(B_{1},\bar{S}({\mathfrak g})_{0})\cong [S^{\bullet}({\mathfrak u}^{*})\otimes \bar{S}({\mathfrak g})]/I_{2}.$$ 
\end{proof} 

We can now provide a description of the ring structure of $\HH^{\bullet}(G_{1})$ via the induction functor and the explicit description of 
$\operatorname{H}^{\bullet}(B_{1},\bar{S}({\mathfrak g}))$ provided in Theorem~\ref{T:B1-coho}. 

\begin{theorem} Let $p\geq 2$ and $G=SL_{2}$. Then there exists an isomorphism of rings 
$$ \HH^{\bullet}(G_{1})^{(-1)}\cong \operatorname{ind}_{B}^{G} \operatorname{H}^{\bullet}(B_{1},\bar{S}({\mathfrak g})_{0})^{(-1)}.$$ 
\end{theorem}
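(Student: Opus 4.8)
The plan is to prove the ring isomorphism $\HH^{\bullet}(G_{1})^{(-1)}\cong \operatorname{ind}_{B}^{G}\operatorname{H}^{\bullet}(B_{1},\bar{S}({\mathfrak g})_{0})^{(-1)}$ by establishing it first at the level of graded vector spaces (equivalently $G$-modules), and then promoting it to a ring map using the multiplicative structure of the spectral sequence of Theorem~\ref{T:spectral1}. The starting point is the spectral sequence $E_{2}^{i,j}=R^{i}\operatorname{ind}_{B}^{G}\opH^{j}(B_{1},\DGone)\Rightarrow \HH^{i+j}(G_{1})$ specialized to $G=SL_{2}$, where $\DGone\cong\bar{S}({\mathfrak g})$ and one may project onto the principal block to replace $\bar{S}({\mathfrak g})$ by $\bar{S}({\mathfrak g})_{0}$. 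The central task is to show that the higher derived functors $R^{i}\operatorname{ind}_{B}^{G}$ vanish on the relevant coefficient module for $i>0$, so that the spectral sequence collapses onto the single column $i=0$.

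First I would identify, using Theorem~\ref{T:B1-coho}, the explicit $B$-module structure of $\opH^{\bullet}(B_{1},\bar{S}({\mathfrak g})_{0})^{(-1)}$. The summands appearing are built from $S^{\bullet}({\mathfrak u}^{*})\otimes\lambda$ with $\lambda$ dominant (weights $0$, $\omega$) together with possibly a twisted factor $L(1)$, as recorded in the $G$-module computation of the earlier theorem. The key vanishing input is the Grauert--Riemenschneider type statement flagged in the strategy subsection: $R^{j}\operatorname{ind}_{B}^{G}\,S^{\bullet}({\mathfrak u}^{*})\otimes\lambda=0$ for $\lambda\in X_{+}$ and $j>0$. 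I would apply this summand-by-summand to each $B$-factor of $\opH^{\bullet}(B_{1},\bar{S}({\mathfrak g})_{0})$, checking that every weight $\lambda$ that occurs is dominant (or is carried by an already-induced dominant module, so that Kempf vanishing and the tensor identity apply). This forces $R^{i}\operatorname{ind}_{B}^{G}\opH^{j}(B_{1},\bar{S}({\mathfrak g})_{0})=0$ for all $i>0$, so $E_{2}^{i,j}=0$ off the column $i=0$, and the spectral sequence degenerates at $E_{2}$. Collapse then yields the additive isomorphism $\HH^{\bullet}(G_{1})\cong\operatorname{ind}_{B}^{G}\opH^{\bullet}(B_{1},\bar{S}({\mathfrak g})_{0})$.

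To upgrade this to a ring isomorphism, I would invoke that the spectral sequence of Theorem~\ref{T:spectral1} is multiplicative, with $\HH^{\bullet}(G_{1})$ as the abutment algebra and $\operatorname{ind}_{B}^{G}(-)$ carrying the algebra structure coming from the $B$-algebra structure of $\opH^{\bullet}(B_{1},\bar{S}({\mathfrak g})_{0})$ (the transfer/Frobenius-reciprocity ring map). Because the sequence collapses onto a single column, the associated-graded algebra is isomorphic to $\operatorname{ind}_{B}^{G}\opH^{\bullet}(B_{1},\bar{S}({\mathfrak g})_{0})$ as a ring, and since there is only one nonzero column there are no filtration-jump ambiguities — the edge homomorphism $\HH^{\bullet}(G_{1})\to E_{\infty}^{0,\bullet}$ is an isomorphism of graded algebras. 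Concretely, the unit and the multiplicative generators lift canonically from $E_{2}^{0,\bullet}$, so the associated-graded ring isomorphism is already the honest ring isomorphism. Both the $p\geq 3$ and $p=2$ cases are handled uniformly here, inserting the corresponding description of $\opH^{\bullet}(B_{1},\bar{S}({\mathfrak g})_{0})$ from Theorem~\ref{T:B1-coho}.

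The main obstacle I anticipate is verifying the derived-functor vanishing cleanly for every summand, in particular confirming that the troublesome twisted summand $L(1)$ (arising from $L(2p-2)\cong\nabla(p-2)\otimes L(1)^{(1)}$ and from the $p=2$ analysis) induces without higher cohomology; this is where one must apply the tensor identity $\operatorname{ind}_{B}^{G}(M\otimes N^{(1)})\cong \operatorname{ind}_{B}^{G}(M)\otimes N^{(1)}$ to peel off the Frobenius-twisted factor and reduce to the Grauert--Riemenschneider vanishing on the untwisted dominant part. A secondary subtlety is ensuring the ring map respects the ideal relations $I_{p}$ from Theorem~\ref{T:B1-coho} after induction; but since induction is a lax monoidal functor and the collapse is single-column, these relations are transported faithfully, so no new obstruction arises at the multiplicative level.
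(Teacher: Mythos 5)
Your strategy is the same as the paper's: run the spectral sequence of Theorem~\ref{T:spectral1} with coefficients $\bar{S}({\mathfrak g})_{0}$, kill the columns $i>0$ by showing $R^{i}\operatorname{ind}_{B}^{G}$ vanishes on every $B$-module factor of $\opH^{\bullet}(B_{1},\bar{S}({\mathfrak g})_{0})$, and read off the isomorphism from the one-column collapse. Your upgrade to a ring isomorphism (multiplicativity of the spectral sequence plus the observation that a single nonzero column leaves no filtration-extension ambiguity, so the edge map is an algebra isomorphism) is in fact spelled out more carefully than in the paper, which simply asserts that the collapse ``yields the result.'' The $p\geq 3$ case and the $L(1)$ factors are also handled correctly: by the tensor identity $R^{j}\operatorname{ind}_{B}^{G}(M\otimes L(1)|_{B})\cong R^{j}\operatorname{ind}_{B}^{G}(M)\otimes L(1)$, these reduce to $S^{\bullet}({\mathfrak u}^{*})\otimes\omega$, whose weights are dominant.

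The gap is in the vanishing step for $p=2$, where your stated criterion --- every weight occurring in $\opH^{\bullet}(B_{1},\bar{S}({\mathfrak g})_{0})^{(-1)}$ is dominant, or sits inside an already-induced dominant module --- is false. By the proof of Theorem~\ref{T:G-module}, the factors $\opH^{0}(B_{1},\nabla(2))^{(-1)}$, $\opH^{0}(B_{1},\Delta(2))^{(-1)}$, and $\opH^{\bullet>0}(B_{1},\Delta(2))^{(-1)}\cong S^{\bullet}({\mathfrak u}^{*})^{(-1)}\otimes(-\omega)$ all contain the weight $-\omega$, which is not dominant and is not carried by an induced module, so neither Kempf vanishing nor your Grauert--Riemenschneider input applies to these summands; as written your argument does not prove the theorem for $p=2$, contrary to the claim that both cases are ``handled uniformly.'' What saves the argument --- and what the paper actually uses --- is that every weight occurring is $\geq -1$: for $SL_{2}$ the weight $-\omega$ satisfies $\langle-\omega,\alpha^{\vee}\rangle=-1$, so by \cite[II.5.4 Proposition]{rags} (equivalently, ${\mathcal O}(-1)$ on ${\mathbb P}^{1}$ has no cohomology in any degree) both $R^{0}\operatorname{ind}_{B}^{G}$ and $R^{1}\operatorname{ind}_{B}^{G}$ annihilate it, while $R^{\geq 2}$ vanishes since $\dim G/B=1$. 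With that supplement the columns $i>0$ vanish, the $-\omega$ factors contribute nothing to either side of the claimed isomorphism, and the rest of your argument goes through.
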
 

\begin{proof} According to Theorem~\ref{T:spectral1}, there exists a spectral sequence: 
$$E_{2}^{i,j}=R^{i}\operatorname{ind}_{B}^{G} \Hh^{\bullet}(B_{1},\bar{S}({\mathfrak g})_{0})^{(-1)} \Rightarrow \HH^{i+j}(G_{1})^{(-1)}.$$ 
From Theorem~\ref{T:G-module}, it follows that as a $B$-module, $\opH^{\bullet}(B_{1},\bar{S}({\mathfrak g})_{0})^{(-1)}$ is a direct sum of $B$-module factors isomorphic to $k$, and  
$S^{\bullet}({\mathfrak u}^{*})$, $\omega$, and $-\omega$ (when $p=2$). From Kempf's vanishing theorem, and \cite[II Proposition 5.4]{rags}, the higher cohomologies, $R^{j}\text{ind}_{B}^{G}(-)$ vanish on these factors. Hence, the spectral sequence collapses and yields the result. 
\end{proof} 

\section{Appendix}\label{S:appendix} 

The following tables give the $G$-module structure of $\HH^{\bullet}(G_{1})$ for $G=SL_{2}$ for $p=2,3,5,7$. Note that 
$\text{Dist}(G_{1})_{\text{ad}}\cong  \bigoplus_{n=0}^{3(p-1)} \overline{S}^{n}({\mathfrak g})$, and as $G$-module 
$$\HH^{\bullet}(G_{1})\cong \bigoplus_{n=0}^{3(p-1)} \Hh^{\bullet}(G_{1},\overline{S}^n({\mathfrak g})).$$ 
In the tables in the last column, if a copy of the trivial module $k$ occurs then the trivial module is located in degree zero of 
$\Hh^{\bullet}(G_{1},\overline{S}^n({\mathfrak g}))^{(-1)}$. Here $\omega$ is the first fundamental weight and $L(1)$ is the $2$-dimensional natural representation. 
\vskip .5cm 

\subsection{$p=2$ case} 

\begin{center}
   \begin{tabular}{ c|c|c|c } 
$n$ & $\overline{S}^n({\mathfrak g})$ & $\Hh^{\bullet}(G_{1},\overline{S}^n({\mathfrak g}))^{(-1)}$  &  \  \\
\hline
0 & $T(0)$& $k[{\mathcal N}]_{\bullet}$ & \  \\ 
\hline
1 & $\Delta(2)$ & $\operatorname{ind}_{B}^{G} [S^{\bullet}({\mathfrak u}^{*})^{(-1)}\otimes -\omega]$ & \text{for $\bullet >0$} \\
\hline 
1 & $\Delta(2)$ &$k$ &  \text{for $\bullet=0$} \\
\hline
2 & $\nabla(2)$ & $\operatorname{ind}_{B}^{G} [S^{\bullet}({\mathfrak u}^{*})^{(-1)}\otimes \omega]$ &  \text{for $\bullet >0$} \\
\hline 
2 & $\nabla(2)$ &$L(1)$  & \text{for $\bullet=0$} \\
\hline
3 & $T(0)$ &$k[{\mathcal N}]_{\bullet}$ \\
\hline
\end{tabular} 
\end{center}

\subsection{$p=3$ case} 

\begin{center}
   \begin{tabular}{ c|c|c } 
$n$ & $\overline{S}^n({\mathfrak g})$ & $\Hh^{\bullet}(G_{1},\overline{S}^n({\mathfrak g}))^{(-1)}$ \\
\hline
0 & $T(0)$& $k[{\mathcal N}]_{\frac{\bullet}{2}}$\\ 
\hline
1 & $T(2)$ &$0$\\
\hline
2 & $T(4)$& $k$ \\ 
\hline
3 & $L(4)\oplus T(2)$ &$\text{ind}_{B}^{G} [S^{\frac{\bullet-1}{2}}({\mathfrak u}^{*})\otimes \omega] \otimes L(1)$\\
\hline
4 & $T(4) $ &$k$ \\
\hline
5 & $T(2)$&$0$\\
\hline
6 & $T(0)$& $k[{\mathcal N}]_{\frac{\bullet}{2}}$\\
\hline

\end{tabular} 
\end{center}

\subsection{$p=5$ case} 

\begin{center}
   \begin{tabular}{ c|c|c } 
$n$ & $\overline{S}^n({\mathfrak g})$ & $\Hh^{\bullet}(G_{1},\overline{S}^n({\mathfrak g}))^{(-1)}$ \\
\hline
0 & $T(0)$& $k[\mathcal{N}]_{\frac{\bullet}{2}}$\\ 
\hline
1 & $T(2)$ &$0$\\
\hline
2 & $T(4)\oplus T(0)$& $k[\mathcal{N}]_{\frac{\bullet}{2}}$ \\ 
\hline
3 & $T(6)$ &$0$\\
\hline
4 & $T(8)\oplus T(4) $ &$k$ \\
\hline
5 & $L(8)\oplus T(6)$& $\text{ind}_{B}^{G} [S^{\frac{\bullet-1}{2}}({\mathfrak u}^{*})\otimes \omega] \otimes L(1)$\\
\hline
6 & $L(6)\bigoplus T(8)\bigoplus T(4)$&$k$\\
\hline
7 & $L(8)\oplus T(6)$ & $\text{ind}_{B}^{G} [S^{\bullet}({\mathfrak u}^{*})\otimes \omega] \otimes L(1)$\\
\hline
8 & $T(8)\oplus T(4) $ &$k$ \\
\hline
9 & $T(6)$ &$0$\\
\hline
10 & $T(4)\oplus T(0)$& $k[\mathcal{N}]_{\frac{\bullet}{2}}$ \\ 
\hline
11 & $T(2)$ &$0$\\
\hline
12 & $T(0)$& $k[\mathcal{N}]_{\frac{\bullet}{2}}$\\ 
\hline
\end{tabular} 
\end{center}
\newpage 

\subsection{$p=7$ case} 

\begin{center}
   \begin{tabular}{ c|c|c } 
$n$ & $\overline{S}^n({\mathfrak g})$ & $\Hh^{\bullet}(G_{1},\overline{S}^n({\mathfrak g}))^{(-1)}$ \\
\hline
0 & $T(0)$& $k[{\mathcal N}]_{\frac{\bullet}{2}}$\\ 
\hline
1 & $T(2)$ &$0$\\
\hline
2 & $T(4)\oplus T(0)$ & $k[{\mathcal N}]_{\frac{\bullet}{2}}$ \\ 
\hline
3 & $T(6)\oplus T(2)$ &$0$\\
\hline
4 & $T(8)\oplus T(0) $ &. $k[{\mathcal N}]_{\frac{\bullet}{2}}$ \\
\hline
5 & $T(10)\oplus T(6)$&$0$\\
\hline
6 & $T(12)\oplus T(8)$&$k$\\
\hline
7 & $L(12)\oplus T(10)\oplus T(6)$& $\text{ind}_{B}^{G} [S^{\frac{\bullet-1}{2}}({\mathfrak u}^{*})\otimes \omega] \otimes L(1)$\\
\hline
8 & $L(10)\oplus T(12)\oplus T(8)$ &$k$ \\
\hline
9 & $L(8)\oplus L(12)\oplus T(10)\oplus T(6)$ &$\text{ind}_{B}^{G} [S^{\frac{\bullet-1}{2}}({\mathfrak u}^{*})\otimes \omega] \otimes L(1)$\\
\hline
10 & $L(10)\oplus T(12)\oplus T(8)$ &$k$ \\
\hline
11 & $L(12)\oplus T(10)\oplus T(6)$&$\text{ind}_{B}^{G} [S^{\frac{\bullet-1}{2}}({\mathfrak u}^{*})\otimes \omega] \otimes L(1)$\\
\hline
12 & $T(12)\oplus T(8)$&$k$\\
\hline
13 & $T(10)\oplus T(6)$&$0$\\
\hline
14 & $T(8) \oplus T(0) $ &   $k[{\mathcal N}]_{\frac{\bullet}{2}}$         \\
\hline
15 & $L(6)\oplus T(2)$ &$0$\\
\hline
16 & $L(4)\oplus T(0)$& $k[{\mathcal N}]_{\frac{\bullet}{2}}$\  \\ 
\hline
17 & $L(2)$ &$0$\\
\hline
18 & $T(0)$& $k[{\mathcal N}]_{\frac{\bullet}{2}}$\\ 
\hline
\end{tabular} 
\end{center}


\providecommand{\bysame}{\leavevmode\hbox
to3em{\hrulefill}\thinspace}

\end{document}